\newcommand{\rev}[1]{{#1}}
\newcommand{\revv}[1]{{#1}}
\newtheoremstyle{theoremstyle1}
{10pt}
{17pt}
{}
{}
{\itshape}
{}
{\newline}
{\thmname{#1}\thmnumber{ #2}\thmnote{ (#3)}}
\theoremstyle{theoremstyle1}
\newtheorem{definition}{Definition}[section]
\newtheorem{theorem}{Theorem}[section]
\newtheorem{remark}{Remark}[section]
\newtheorem{corollary}{Corollary}[section]
\newtheorem{lemma}{Lemma}[section]
\newtheorem{proposition}{Proposition}[section]
\definecolor{gr}{rgb}{0.5,0.5,0.5} 
\newcommand{\inh}{w}
\newcommand{\ks}{k_s}
\newcommand{\imag}{{\textrm{i}}}
\newcommand{\newpar}{\\ \par}
\newcommand{\xI}{x_1}
\newcommand{\xII}{x_2}
\begin{document}

\preprint{AIP/123-QED}

\title[Time Delay in the Swing Equation: A Variety of Bifurcations]{
Time Delay in the Swing Equation: 
A Variety of Bifurcations}

\author{T. H. Scholl}
 \email{tessina.scholl@kit.edu}
\author{L. Gröll}%
 \email{lutz.groell@kit.edu}
\author{V. Hagenmeyer}
 \email{veit.hagenmeyer@kit.edu}
\affiliation{ 
Institute for Automation and Applied Informatics, Karlsruhe Institute of Technology, \\
76344 Eggenstein-Leopoldshafen, Germany
}

\date{2019}

\begin{abstract}
The present paper addresses the swing equation with additional delayed damping as an example for pendulum-like systems. In this context, it is proved that recurring sub- and supercritical Hopf bifurcations occur if time delay is increased. To this end, a general formula for the first Lyapunov coefficient in second order systems with additional delayed damping and delay-free nonlinearity is given. In so far the paper extends results about stability switching of equilibria in linear time delay systems from Cooke and \revv{Grossman}\rev{.}  In addition to the analytical results, periodic solutions are numerically dealt with. The numerical results demonstrate how a variety of qualitative behaviors is generated in the simple swing equation by only introducing time delay in a damping term.
\end{abstract}

\maketitle

\begin{quotation}
The swing equation is, e.g., at the core of any power system model. By delayed frequency control it becomes a pendulum equation with delayed damping - a system which is shown to exhibit highly complex dynamics. Repeating Hopf bifurcations lead again and again to the emergence of limit cycles as delay is increased. These limit cycles undergo further bifurcations including period doubling cascades and the birth of invariant tori. 
\end{quotation}


\section{\label{sec:}Introduction}
Second order ordinary differential equations with periodic nonlinearity
describe various non-related systems such as \rev{synchronous generators or  single-machine-infinite-bus (SMIB) equivalents of power systems, Josephson junction circuits, phase-locked loops or mechanical pendulum mechanisms\cite{Khalil.2002}}. \rev{Consider the scalar equation
\begin{align}
\ddot y+a  \dot y+\ks \sin(y)&=u
\end{align}
with an external input $u$.
Either in order to increase damping or to control the frequency (if $y$ is expressed w.r.t.\ a rotating reference frame), $u$ 
might involve a derivative-dependent feedback $u=\inh-\tilde a \dot y$. 
 However, due to measurement, communication, data processing, or response behavior of actuators,  this controller action is more or less delayed. Consequently, damping with delay $\tau>0$ arises in }
\begin{align}
\ddot y(t)+a  \dot y(t)+\tilde a \dot y(t-\tau) + \ks \sin(y(t))=\inh, 
\label{eq:main}
\end{align}
$a,\tilde a, k_s, \inh> 0$.
Schaefer et al.\cite{Schafer.2015} \rev{show that these dynamics might become of high technical relevance in a future power system, where time delay is caused by frequency-dependent reactions of the demand side. In this smart grid, $y(t)\in \mathbb R$ represents an  aggregated} relative  generator angle  w.r.t.\ a rotating reference\cite{Schafer.2015}.
However, the thereby generated system dynamics are only described in a first attempt in Schaefer et al.\cite{Schafer.2015}\rev{. Hence,} a thorough analysis of these dynamics is an open question. 
\\
\\
Therefore, the present paper examines how the delay changes the overall system dynamics and gives the respective results (throughout the paper, the term increase denotes not a time-varying sense but addresses another autonomous system with changed parametrization).
Dynamics of the delay-free swing equation, or equivalently the driven pendulum equation with constant torque, is well known\cite{Schaeffer.2016,Andronov.1966,Leonov.1996,Manik.2014,Levi.1978} (Figure \ref{fig:PhasePortrait}).
All additional complex phenomena which are made possible by time delay must successively emerge with increasing delay. 
Hence, we start with the delay-free system and tackle the problem by a bifurcation analysis with time delay as the bifurcation parameter. Foundation is laid by bifurcation theory of retarded functional differential equations (RFDEs)\cite{Diekmann.1995,Hale.1993}. 
\begin{figure}
\includegraphics[]{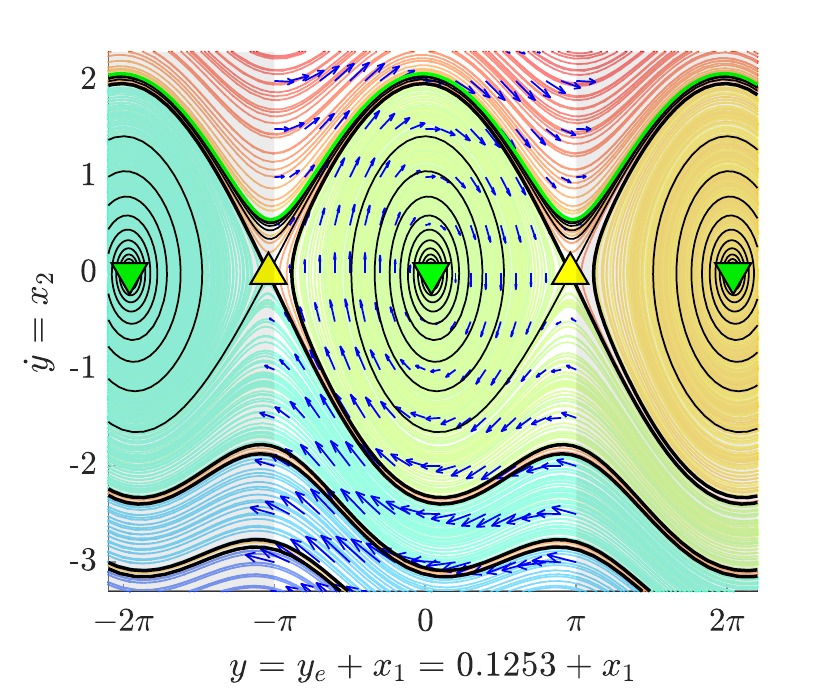}
\caption{\label{fig:PhasePortrait} Phase portrait of the delay-free dynamics (parameterization (\ref{eq:param}), $\tau=0$), i.e. damping coefficient $a+\tilde a$. }
\end{figure} 
\newpar
The linearization of (\ref{eq:main}) belongs to the general class of linear second order systems with delayed damping\rev{\cite{Cooke.1982,Gilliam.2002,Sherman.1947,Ansoff.1948,Pinney.1958,Minorsky.1942,Minorsky.1948,Erneux.2009}}.  
For these linear systems, Cooke and \revv{Grossman} \cite{Cooke.1982} have already proved alternate switching between stability and instability with increasing delay. Such switching points in the linearization are candidates for Hopf bifurcations in the nonlinear system (\ref{eq:main}). The latter is a well known fact from ordinary differential equations (ODEs) and indeed also applies to the  RFDE (\ref{eq:main}).  To the authors' best knowledge, systems with delayed damping \rev{and delay-free nonlinearity $h(y(t))$} have not yet been subject to bifurcation analysis in literature. Besides of delayed damping $\tilde a \dot y(t-\tau)$, $\tilde a \in \mathbb R_{>0}$ Cooke and \revv{Grossman} \cite{Cooke.1982} have proved similar stability switching behavior for 
linearized systems with delayed restoring force $\tilde c y(t-\tau)$, $\tilde c\in \mathbb R_{>0}$. 
   For this related system class, results are already given by Campbell at al.\cite{Campbell.1995c}, revealing not only limit cycles but also invariant tori. \rev{Similar observations are also valid for  Minorsky's equation\cite{Erneux.2009}, which in contrast to (\ref{eq:main}) exhibits a derivative-dependent and overall delayed nonlinear term which does not depend on $y(t)$. }
The occurrence of oscillatory problems due to limit cycles\cite{Mithulananthan.2000}, invariant tori\cite{Ji.1995,Tan.1995} and even period doubling cascades  which lead to chaos\cite{Tan.1995} are also relevant in the context of power systems. However, their observation requires higher order models. The present paper aims to show that only by introducing time delay, the simple swing equation alone is able to reveal  a rich variety of such phenomena.
\newpar
\rev{Moreover, the presented result on the first Lyapunov coefficient (Theorem \ref{th:LyapCoeffRes})  is valid for more general systems
\begin{align}
\ddot y(t)+a \dot y(t) + \tilde a \dot y(t-\tau) +  h(y(t))=0.
\label{eq:main_general}
\end{align}
This structure arises whenever a feedback law $u$ for a plant
\begin{align}
\ddot y + a \dot y +  cy+h_0(y)=u
\end{align}
uses a delayed measurement of the derivative.
For instance, a PD-like controller with static feedforward $w$  
\begin{align}
u(t)=w- k_P y(t)-k_D \dot y(t-\tau)
\end{align}
(state feedback realization with delay in $x_2=\dot y$)
results in (\ref{eq:main_general}) with $\tilde a =k_D$ and $h(y)=c y+h_0(y)-w+k_Py$.  }
\newpar
The paper is organized as follows. Subsequently, the used notation and needed preliminaries are given. Section \ref{sec:BasicAnalysis}
provides basic insights to the system dynamics by an analysis of the linearized system. Section \ref{sec:BifurcationAnalysis} addresses analytical as well as  numerical results on equilibrium bifurcations and a numerical treatment of limit cycle bifurcations. 

\subsection{Notation and Preliminaries}
We are concerned with an {\itshape autonomous difference differential  equation}\cite{Bellman.1963,Hahn.1967} $\dot x(t)=f(x(t),x(t-\tau))$, with $x(t)\in \mathbb R^n$ and a single discrete delay $\tau>0$. Consequently, not only the instantaneous value $x(t)\in \mathbb R^n$ which is an element of the $n$-dimensional Euclidean space  is decisive for the dynamics. Instead the delay-wide last solution segment $x(\tilde t)$, $\tilde t\in[t-\tau,t]$ has to be considered. Such delay-spanning solution segments are commonly denoted as $x_t(\theta) \stackrel{\textrm{def}}{=}x(t+\theta)$, $\theta\in[-\tau,0]$. 
The more general system class are {\itshape retarded functional differential equations (RFDE)} 
$\dot x(t)= F(x_t)$ 
, $F\colon X\to \mathbb R^n$. A Cauchy problem requires an initial state $x_0=\phi\in X$, i.e.\ an initial function segment. Obviously, the latter has to stem from the infinite dimensional state space which is usually a predefined Banach space $X$. A very common state space for time delay systems is the space of continuous functions $C([-\tau,0],\mathbb R^n)$ endowed with the uniform norm $\|\phi \|_C
=\max_{\theta\in [-\tau,0]}(\|\phi(\theta)\|_2)$. A zero equilibrium of an autonomous RFDE is 
locally asymptotically stable  (LAS) in $X$, if 
it is {\itshape stable} in $X$, i.e.\ 
$
\forall \varepsilon>0, \;\exists \delta(\varepsilon)>0: \; \|\phi\|_X \revv{<}\delta  \; \Rightarrow \|x_t\|_X \revv{<} \varepsilon, \; \forall t\geq 0
$
and {\itshape locally attractive} in $X$, i.e.\ 
$
\exists \delta_a>0: \quad \|\phi\|_X\revv{<} \delta_a  \; \Rightarrow \lim_{t\to \infty} \|x_t\|_X=0.
$
The {\itshape domain of attraction} $\mathcal D_{X}$ of an asymptotically stable zero equilibrium 
 is the set of all initial functions which lead to a zero-convergent solution, i.e.\ 
$
\mathcal D_{X}=\{\phi\in X: x_t \text{ exists on } t\in[0,\infty) \text{ and }\lim_{t\to \infty}\|x_t\|_X = 0  \}. 
$
\newpar
The {\itshape Principle of Linearized Stability} (Diekmann\cite{Diekmann.1995}, Chapter VII, Theorem 6.8) allows to conclude stability properties of equilibria based on a respective linearization about them. 
For well-posed autonomous linear RFDEs in a suited Banach space, the family $\{\mathscr T(t)\}_{t\geq 0}$ of solution operators $\mathscr T(t): X \to X$, $x_0 \mapsto x_t$ forms a {\itshape $C_0$-semigroup}. The spectrum of its {\itshape infinitesimal generator} $\mathscr A$ is a pure point spectrum $\sigma(\mathscr A)$. These eigenvalues $\lambda \in \sigma(\mathscr A)$ decide about stability of the linear system and are equal to the {\itshape characteristic roots}. Such characteristic roots can be derived analogously to the ODE case, i.e.\ either based on Laplace transformation of the linear RFDE, or on the ansatz $x(t)=v\textrm e^{\lambda t}$, $v\in \mathbb R^n$. 
Concerning limit cycle stability, a linearization about the periodic solution results in a linear time-periodic RFDE with family of solution operators $\{\mathscr T(t,t_0)\}_{t\geq t_0}$ as an analogue to the state transition matrix in ODEs.
It should be noted that a complete Floquet theory does not exist for RFDEs\cite{Hale.1993},
but space decompositions w.r.t generalized eigenspaces are very helpful\cite{Hale.1993,Diekmann.1995}.
Similarly to ODEs, the {\itshape Floquet multipliers}, i.e.\ eigenvalues of the {\itshape monodromy operator} $\mathscr T(T,0)$, are decisive for stability of linear time-periodic RFDEs with period $T$. Thereby, instability or asymptotic stability in the sense of Poincaré can be concluded for the limit cycle of the  nonlinear system. 
\section{\label{sec:BasicAnalysis}Basic Analysis}
In a first step, we will introduce a state space description of (\ref{eq:main}). 
W.l.o.g. we assume $\ks:=1$ in (\ref{eq:main}), since any other swing equation with arbitrary $\hat k_s>0$
\begin{align}
\hat y
''
(\hat t)
+\hat a  
\hat y
'
(\hat t)
+\hat{\tilde a} 
\hat y
'
(\hat t-\hat \tau) +\hat \ks \sin(\hat y(\hat t))=\hat \inh , 
\label{eq:dimless}
\end{align}
$(\,\cdot\,)'=\textrm d/(\textrm d {\hat t}) $
can be transformed by $\hat t =  t /{\sqrt{\hat \ks}} $ and $\hat y( t / {\sqrt{\hat \ks}} )=y(t)$ to (\ref{eq:main}) with dimensionless parameters 
\begin{align}
k_s=1, 
\hspace{0.9em} a=\frac{\hat a}{\sqrt{\hat \ks}}, 
\hspace{0.9em} \tilde a=\frac{\hat {\tilde a}}{\sqrt{\hat \ks}}, 
\hspace{0.9em} \inh=\frac{\hat \inh }{ \hat \ks}, 
\hspace{0.9em} \tau=\hat \tau {\sqrt{\hat \ks}}.
\end{align}
\subsection{State Space Description}
\rev{First, we introduce a relative coordinate $x_1=y-y_e$ with respect to some reference value $y_e$. Hence, if $y_e$ is chosen to be an equilibrium of interest, the common assumption, that this equilibrium lies in the origin, is already met in $x$-coordinates.}
For $x=[y-y_e,\dot y]^\top$, $y_e\in \mathbb R$, a state space representation of (\ref{eq:main}) with $k_s=1$ reads
\begin{alignat}{5}
\dot x(t)=& 
\begin{bmatrix} 0 & 1  \\ 0 & -a\end{bmatrix}
x(t) 
+
\begin{bmatrix} 0 & 0  \\ 0 & -\tilde a \end{bmatrix} 
x(t-\tau)
+&\begin{bmatrix} 0 \\ \inh-\sin(x_1(t)+y_e)\end{bmatrix}&
\nonumber\\
x(\theta) &=\phi(\theta), & \theta \in[-\tau,0] & 
\label{eq:delayedSwingEq}
\end{alignat}
$x(t)\in \mathbb R^2$, $t\in \mathbb R_{\geq (-\tau)}$, $a,\tilde a,\inh \in \mathbb R_{>0}$, $\tau \in \mathbb R_{\geq 0}$.
Thereby, $\phi\in C([-\tau,0],\mathbb R^2)$ describes the initial function.
\\
\par
We are interested in $\inh\in(0,1]$, since for $|\inh|> 1$ no equilibrium point exists. 
\rev{By assigning the principal value
\begin{align}
y_e:=\arcsin(\inh)
\label{eq:ye}
\end{align}
as reference, the equilibrium $y(t)\equiv \arcsin(\inh)$ in (\ref{eq:main}) is shifted to $x(t)\equiv [0,0]^\top$ in (\ref{eq:delayedSwingEq})}. Besides of $y(t)\equiv y_e+2k\pi$, $k\in \mathbb Z$ there are also equilibrium points in $y(t)\equiv (\pi-y_e) +2k\pi$, $k\in \mathbb Z$ (Figure \ref{fig:PhasePortrait}) - unless $\inh=1$ where both sets coincide. Consequently, in the coordinates of (\ref{eq:delayedSwingEq}), equilibria are $x(t)\equiv x_e$, $x_e\in \mathcal E_x$ with
\begin{align}
\mathcal E_x =\left\{\begin{bmatrix}2k\pi\\ 0\end{bmatrix}, k\in \mathbb Z\right\}
\cup  \left\{\begin{bmatrix}\pi-2y_e\\0\end{bmatrix}+\begin{bmatrix}2k\pi\\0\end{bmatrix}, k\in \mathbb Z
\right\},
\end{align}
if $\inh\in(0,1]$. 
By $X=C([-\tau,0],\mathbb R^2)$ the codomain of $\phi, x_t,x$ has been chosen to be the plane such that $x(t)\in \mathbb R^2$. In the plane, equilibria which differ in the angle value $x_1$ by $2\pi$ are considered as distinct equilibria. Hence, a slip over before settling down to $[2\pi,0]^\top$ will not contribute to the domain of attraction of $x_e=[0,0]^\top$, albeit the states are physically indistinguishable afterwards. 
This distinction would not be true on the cylinder $\mathbb S^1\times \mathbb R \ni [x_1(t),x_2(t)]^\top$, which is an alternative and quite natural choice due to periodicity in the above system. 
\newpar
Unless otherwise stated, we align the parametrization with Schaefer al al.\cite{Schafer.2015}. According to (\ref{eq:dimless}), the parameter set 
$\hat a=0.1 \,{\textrm s}^{-1}$, 
$\hat{\tilde a}= 0.25 \,{\textrm s}^{-1}$, 
$\hat \inh =2 \,{\textrm s^{-2}}$, 
$\hat k_s = 16  \,{\textrm s^{-2}}$ 
becomes 
\begin{align}
k_s=1, \quad a=0.025, \quad \tilde a=0.0625, \quad \inh=0.125.
\label{eq:param}
\end{align}
We are aware that considerations of small time scales serve as a legitimization for the strong simplification (\ref{eq:main}) as a power system model\cite{Grainger.1994,Bergen.2000,Anderson.1982,Kundur.1994}. 
However, in accordance with Schaefer at al.\cite{Schafer.2015} we will use (\ref{eq:main}) also on large time scales in order to create awareness for qualitative effects of a large delay term. A physical time delay of $\hat \tau=10\, \textrm s$ (Schaefer et al.\cite{Schafer.2015} 2015, Figure 5) results in $\tau=40$.   
\subsection{Linearization Based Stability Analysis}
Linearizing the delayed swing equation gives a linear second order differential equation with delayed damping. For such systems a stability analysis can already be found in literature\cite{Cooke.1982}.
The following lemma summarizes results about asymptotic stability and instability as far as the equilibrium is hyperbolic.
\begin{lemma}[Stability Switching \cite{Cooke.1982}]
\label{lem:stabSwitch}
Consider the linear RFDE
\begin{align}
\dot x(t)=& 
\begin{bmatrix} 0 & 1  \\ -c & -a\end{bmatrix}
x(t) 
+
\begin{bmatrix} 0 & 0  \\  0& -\tilde a \end{bmatrix} 
x(t-\tau)
\label{eq:linerRFDE_delayedDamping}
\end{align}
for fixed $a,\tilde a \in \mathbb R_{>0} , c \in \mathbb R$ 
and with a variable delay parameter $\tau>0$.\\
{\itshape (i)} If $c>0$, the system is asymptotically stable for $\tau=0$. 
\\
{\itshape (i-a) }If $\tilde a < a$, the system remains asymptotically stable for $\tau\geq0$. 
\\
{\itshape  (i-b) }If $a<\tilde a$, there are two sequences of delay values $(\tau_{1n})_{n\in\mathbb N}$ and $(\tau_{2n})_{n\in\mathbb N}$
\begin{subequations}
\begin{align}
\tau_{1n}=&\frac 1 {\omega_{1}} \left(\arccos \left(-\frac a {\tilde a}\right)+2\pi n \right) 
\label{eq:tau1n}
\\
\tau_{2n}=&\frac 1 {\omega_{2}} \left(-\arccos \left(-\frac a {\tilde a}\right)+2\pi (n+1) \right)
\label{eq:tau2n}
\end{align}
\label{eq:tau12n}
\end{subequations}
with corresponding 
\begin{align}
\omega_{1,2}= \pm \frac 1 2 {\sqrt{\tilde a^2- {a^2} }} 
+\sqrt{c+\frac 1 4 {(\tilde a^2- {a^2})} }
\label{eq:omega12}
\end{align}
such that the system switches to instability at $\tau_{1n}$ and back to asymptotic stability at $\tau_{2n}$ with increasing delay, as long as $\tau_{2n}<\tau_{1(n+1)}$.
If $\tau\in(\tau_{jn})_{n\in\mathbb N}$, $j\in\{1,2\}$ the equilibrium is non-hyperbolic 
and a simple complex conjugate eigenvalue pair $\{\imag \omega_j, -\imag \omega_j\}$ occurs.   \\
{\itshape (ii) } If $c=0$, the equilibrium is non-hyperbolic.
\\
{\itshape (iii) } If $c<0$, the system is unstable for $\tau=0$ and remains unstable $\forall \tau\geq0$.
\end{lemma}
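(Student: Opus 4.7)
The plan is to track the characteristic roots of (\ref{eq:linerRFDE_delayedDamping}) as $\tau$ varies. Substituting $x(t)=v\,\textrm e^{\lambda t}$ yields the quasi-polynomial
\begin{align*}
\Delta(\lambda,\tau)=\lambda^2+a\lambda+\tilde a\lambda\,\textrm e^{-\lambda\tau}+c,
\end{align*}
whose zeros form $\sigma(\mathscr A)$. Cases (ii) and (iii) follow at once: for $c=0$ one has $\Delta(0,\tau)=0$, so $\lambda=0$ lies in the spectrum for every $\tau$ and the equilibrium is non-hyperbolic; for $c<0$ one has $\Delta(0,\tau)=c<0$ while $\Delta(\lambda,\tau)\to+\infty$ as $\lambda\to+\infty$ along the real axis, so a positive real root exists for every $\tau\geq 0$, yielding instability throughout.

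For case (i) with $c>0$, the polynomial $\Delta(\lambda,0)=\lambda^2+(a+\tilde a)\lambda+c$ is Hurwitz (both coefficients positive), so asymptotic stability holds at $\tau=0$. The characteristic roots depend continuously on $\tau$, and any change of stability must therefore proceed by a crossing $\lambda=\imag\omega$ with $\omega>0$. Separating real and imaginary parts of $\Delta(\imag\omega,\tau)=0$ yields
\begin{align*}
\omega^2-c=\tilde a\omega\sin(\omega\tau),\qquad \cos(\omega\tau)=-a/\tilde a.
\end{align*}
The second equation is unsolvable for $\tilde a<a$, which establishes case (i-a). For $\tilde a>a$ it forces $\sin(\omega\tau)=\pm\sqrt{1-a^2/\tilde a^2}$, and substituting into the first produces the two quadratics $\omega^2\mp\omega\sqrt{\tilde a^2-a^2}-c=0$, whose positive solutions are exactly $\omega_1$ and $\omega_2$ of (\ref{eq:omega12}). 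Inverting $\cos(\omega_j\tau)=-a/\tilde a$ with the sign-compatible branch of $\arccos$ then yields the sequences $\tau_{1n},\tau_{2n}$ in (\ref{eq:tau12n}); simplicity of $\pm\imag\omega_j$ follows from a direct check that $\partial_\lambda\Delta(\imag\omega_j,\tau_{jn})\ne 0$.

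To identify the direction of each crossing I would compute the sign of $\textrm{Re}(\textrm d\lambda/\textrm d\tau)$ at $\tau_{jn}$. Implicit differentiation of $\Delta(\lambda,\tau)=0$ together with the identity $\tilde a\lambda\,\textrm e^{-\lambda\tau}=-(\lambda^2+a\lambda+c)$ valid along the root locus reduces, after a short computation, to
\begin{align*}
\textrm{sign}\,\textrm{Re}(\textrm d\lambda/\textrm d\tau)\big|_{\lambda=\imag\omega_j}=\textrm{sign}(\omega_j^2-c),
\end{align*}
and direct evaluation from (\ref{eq:omega12}) gives $\omega_1^2-c>0$ while $\omega_2^2-c<0$ (the latter using $c>0$). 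Consequently, at every $\tau_{1n}$ a conjugate root pair crosses from the open left half-plane into the open right half-plane (loss of stability) and at every $\tau_{2n}$ it crosses back (recovery of stability), provided the two sequences interlace, i.e.\ $\tau_{2n}<\tau_{1,n+1}$. I expect the main technical hurdle to be the global bookkeeping of this process: correctly pairing each $\omega_j$ with its $\arccos$-branch, ensuring no further root slips across the imaginary axis between the identified crossings, and verifying that the simple-root transversality propagates through the ordering of the $\tau_{jn}$ so that the unstable-eigenvalue count on each interval is unambiguous.
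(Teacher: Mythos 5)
Your proposal is correct and follows essentially the same route as the paper's own sketch: locating imaginary-axis crossings of the characteristic quasipolynomial $\det\Delta(\imag\omega)=0$ by separating real and imaginary parts, solving $a+\tilde a\cos(\omega\tau)=0$ for the two delay sequences, and using the sign of $\Re e(\textrm d\lambda/\textrm d\tau)$ (equivalently $\textrm{sgn}(\omega_j^2-c)$) for the crossing direction. You in fact cover more ground than the paper, which only sketches case (i-b) and defers the remaining cases and the transversality details to Cooke and Grossman; your residual concern about the global root-counting bookkeeping is exactly the part the paper also delegates to that reference.
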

The reader is referred to Cooke and \revv{Grossman} \cite{Cooke.1982} for a detailed proof.
We will shortly motivate equations (\ref{eq:tau12n}), (\ref{eq:omega12}) since we will later need some intermediate results.
\begin{proof}[Sketch of Proof for \itshape (i-b)]
Eigenvalues are roots of the characteristic quasipolynomial $\det(\Delta(\lambda))$ with
\begin{align}
\label{eq:Delta}
\Delta:\mathbb C \to \mathbb C^{2\times 2}, \quad \lambda \mapsto  \Delta(\lambda):=\begin{bmatrix} \lambda & -1  \\ c & \lambda+a+\tilde a \textrm{e}^{-\lambda \tau}  \end{bmatrix}. 
\end{align}
By continuity of the spectrum w.r.t.\ parameter changes (Michiels and Niculescu\cite{Michiels.2014}, Theorem 1.15) in RFDEs, a stability switch necessarily goes along with an eigenvalue $\lambda_i$ crossing the imaginary axis. 
Since 
\begin{align}
\lambda_i \neq 0, \quad \text{ if }c>0
\label{eq:lambda_neq_0}
\end{align}
this can only be accompanied by a complex conjugated eigenvalue pair on the imaginary axis $\{\lambda_i,\overline \lambda_i\}=\{\imag\omega,-\imag\omega\}$, $\omega>0$. Hence,  if the delay value $\tau$ should be a candidate point for stability switching, the characteristic equation $\det (\Delta(\lambda))=0$ must be fulfilled for $\lambda=\imag\omega$, $\omega > 0$. Thereby, $\det (\Delta(\imag\omega))=0$ is equivalent to
\begin{align}
\imag\omega ( a+\tilde a \textrm e^{-\imag\omega \tau} )&=\omega^2-c .
\label{eq:iwaexp}
\end{align}
Comparing imaginary parts of (\ref{eq:iwaexp})
\begin{align}
a+\tilde a \cos(\omega\tau)=0 
\label{eq:compImag}
\end{align}
leads to sequences $(\tau_{1n})_{n\in \mathbb N}$, $(\tau_{2n})_{n\in \mathbb N}$ with
\begin{align}
\omega_{1}\tau_{1n}=&\arccos \left(-\tfrac a {\tilde a}\right)+2\pi n
\label{eq:omega_tau}
\\
&\Rightarrow \quad
\sin(\omega_{1}\tau_{1n})= \sqrt{1-\tfrac {a^2} {\tilde a^2}} , \label{eq:sin1}
\\
\omega_{2}\tau_{2n}=&-\arccos \left(-\tfrac a {\tilde a}\right)+2\pi (n+1)
\\
&
\Rightarrow \quad 
\sin(\omega_{2}\tau_{2n})= -\sqrt{1-\tfrac {a^2} {\tilde a^2}}
,\label{eq:sin2}
\end{align}
while comparing real parts of (\ref{eq:iwaexp}) and using (\ref{eq:sin1}), (\ref{eq:sin2}) gives
\begin{align}
\omega^2 \mp  \tilde a \sqrt{1-\tfrac {a^2} {\tilde a^2}} \omega -c&=0 . 
\label{eq:compRe}
\end{align}
These two equations for  $\omega_1$ ($-$-case) and $\omega_2$ ($+$-case) with the restriction to $\omega_{1,2}>0$ are solved by (\ref{eq:omega12}). Furthermore\cite{Cooke.1982}, roots $\lambda=\imag\omega$ are simple and 
\begin{subequations} \label{eq:dReLambda_dTau}
\begin{align}
\tfrac{\textrm d}{\textrm d \tau} (\Re e \,\lambda)
&>0\quad&&\forall \tau \in (\tau_{1n})_{n\in \mathbb N},
\\
\tfrac{\textrm d}{\textrm d \tau} (\Re e \,\lambda)&<0&&\forall \tau \in (\tau_{2n})_{n\in \mathbb N}.
\end{align} 
\end{subequations}
The latter inequalities determine, whether roots cross the imaginary axis from left to right or from right to left, respectively, when delay is increased. Starting with stability of the delay-free system (Lemma \ref{lem:stabSwitch} (i)), it can be concluded how the number of eigenvalues in the open right half-plane $\mathbb C_{+}$ develops with increasing $\tau$.
\end{proof}
\begin{remark}[D-Subdivision Method and Stability Charts]
Substitution of $\lambda=\gamma + i \omega$ in the characteristic equation $\det \Delta(\lambda)=0$, comparing real and imaginary parts, then setting $\gamma=0$ in order to get $\omega$-parametrized curves that separate $\gamma<0$ from $\gamma>0$ in the parameter plane (so-called D-curves)  and finally analyzing the ascent direction of $\gamma$ at those curves by its partial derivative w.r.t. a system parameter, 
is known as D-subdivision method\cite{Stepan.1989}. It goes back to Neimark\cite{Neimark.1949}. 
D-curves (also called exponent-crossing curves or transition curves) separate the parameter space in regions with a constant number of unstable eigenvalues.
For the most fundamental linear RFDEs, the analytic stability criteria and corresponding stability charts in parameter planes are available in literature, see for instance \rev{Stépán\cite{Stepan.1989} or Insperger and Stépán\cite{Insperger.2011}}.
\end{remark}
Our main interest focuses on the case of prevalent delayed damping $\tilde a$. Indeed, the above described changes between being stable and unstable also occur for the equilibrium point $x_e= [0,0]^\top$ in the nonlinear system (\ref{eq:delayedSwingEq}).    
\begin{proposition}[Stability of Equilibria]
\label{prop:stabOfEquilibria}
Consider the delayed swing equation (\ref{eq:delayedSwingEq}) with $\inh\in(0,1)$.
\\
{\itshape (i)} The equilibria $x_e=[0,0]^\top+[2k\pi,0]^\top$, $k\in \mathbb Z$ are asymptotically stable in the delay-free case $\tau=0$.
\\
{\itshape (i-a) }If undelayed damping is prevalent, i.e.\ $\tilde a< a$, these equilibria remain delay-independently stable.
\\
{\itshape (i-b) }If delayed damping is prevalent, i.e.\ $a<\tilde a$, these equilibria switch stability with increasing delay. They are non-hyperbolic for delay sequences $(\tau_{1n})_{n\in \mathbb N}$, $(\tau_{2n})_{n\in \mathbb N}$ given by (\ref{eq:tau12n}) with 
\begin{align}
c=\sqrt{1-\inh^2}
\label{eq:c_inh_relation}
\end{align}
and hyperbolic otherwise. 
In the hyperbolic case, if $\tau\in\Theta_u$,
\begin{align}
\Theta_u=  \bigcup_{\substack{n\in \mathbb N\\ n \leq n_{\max}}} \!\!\!\big(\tau_{1n},\tau_{2n}\big) \;\cup \big(\tau_{1(n_{\max}+1)},\infty\big)
\label{eq:unstableTau}
\end{align}
with 
\begin{align}
n_{\max}=\max \{n\in \mathbb N: \tau_{2n}<\tau_{1(n+1)} \}
\label{eq:n_max}
\end{align}
the equilibria are unstable and if $\tau \in \mathbb R_{\geq 0} \setminus \overline \Theta_u $ 
asymptotically stable, where $\overline \Theta_u$ denotes the closure of (\ref{eq:unstableTau}).
\\
{\itshape (ii)} The equilibria in $x_e=[\pi-2y_e,0]^\top+[2k\pi,0]^\top$ are delay-independently unstable.
\end{proposition}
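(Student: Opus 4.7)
The plan is to reduce the statement to Lemma \ref{lem:stabSwitch} applied to the Jacobian linearization of (\ref{eq:delayedSwingEq}) about each equilibrium, and then invoke the Principle of Linearized Stability for RFDEs to transfer the conclusions to the nonlinear system. The only nontrivial dependence on the equilibrium enters through the derivative of the nonlinear term $\sin(x_1(t)+y_e)$ with respect to $x_1$ evaluated at $x_1=0$, so the whole analysis is driven by $c=\cos(y_e^{\text{eff}})$, where $y_e^{\text{eff}}$ is the original angle of the equilibrium in the $y$-coordinate.

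First, I would compute the linearization around $x_e=[0,0]^\top$. Since $\tfrac{\partial}{\partial x_1}\sin(x_1+y_e)\big|_{x_1=0}=\cos(y_e)$ and $y_e=\arcsin(\inh)$, we obtain exactly (\ref{eq:linerRFDE_delayedDamping}) with $c=\cos(\arcsin(\inh))=\sqrt{1-\inh^2}$. Since $\inh\in(0,1)$ this value of $c$ is strictly positive, so Lemma \ref{lem:stabSwitch} (i), (i-a), (i-b) apply verbatim. Invariance of (\ref{eq:delayedSwingEq}) under $x_1\mapsto x_1+2k\pi$ (because $\sin$ is $2\pi$-periodic) means the same linearization holds at every equilibrium $[2k\pi,0]^\top$, and so the same stability statements transfer. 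For the hyperbolic cases ($\tau$ not in $(\tau_{1n})$ or $(\tau_{2n})$), the Principle of Linearized Stability delivers asymptotic stability of the zero equilibrium of (\ref{eq:delayedSwingEq}) whenever all characteristic roots lie in $\mathbb{C}_-$, and instability whenever at least one root lies in $\mathbb{C}_+$. Tracking the number of unstable roots as $\tau$ grows—using the crossing direction information (\ref{eq:dReLambda_dTau}) from the lemma—I can identify exactly the union of intervals (\ref{eq:unstableTau}) on which at least one root has crossed into $\mathbb{C}_+$ and not yet crossed back; this gives the characterization of $\Theta_u$ and the definition of $n_{\max}$.

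For part (ii), I would linearize about $x_e=[\pi-2y_e,0]^\top$. Shifting the argument of $\sin$ and using $\cos(\pi-y_e)=-\cos(y_e)$ gives $c=-\sqrt{1-\inh^2}<0$ in the linearization (\ref{eq:linerRFDE_delayedDamping}). Lemma \ref{lem:stabSwitch} (iii) then asserts that the linearization is unstable for every $\tau\geq 0$, and the Principle of Linearized Stability carries this over to instability of the corresponding equilibrium in (\ref{eq:delayedSwingEq}). Periodicity in $x_1$ again extends the conclusion to all translates $[\pi-2y_e+2k\pi,0]^\top$.

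I expect the routine steps—computing the Jacobian and invoking the lemma—to be straightforward. The only point that requires some care is the bookkeeping of unstable eigenvalue pairs to justify the exact form of $\Theta_u$ in (\ref{eq:unstableTau}): one must argue that before $\tau_{1\,1}$ the spectrum is entirely in $\mathbb{C}_-$, that each $\tau_{1n}\le\tau_{2n}$ adds exactly one complex-conjugate pair to $\mathbb{C}_+$ and each $\tau_{2n}$ removes one, and that this bookkeeping remains valid only while $\tau_{2n}<\tau_{1(n+1)}$; once the ordering fails, no further $\tau_{2n}$ can restore stability, explaining the unbounded terminal interval $(\tau_{1(n_{\max}+1)},\infty)$. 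This ordering observation is precisely what the definition of $n_{\max}$ in (\ref{eq:n_max}) encodes, so no additional analytical work beyond Lemma \ref{lem:stabSwitch} is required.
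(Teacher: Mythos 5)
Your proposal is correct and follows essentially the same route as the paper: linearize about each equilibrium, identify $c=\sqrt{1-\inh^2}$ (resp.\ $c=-\sqrt{1-\inh^2}$) in (\ref{eq:linerRFDE_delayedDamping}), apply Lemma \ref{lem:stabSwitch}, and transfer the conclusions via the Principle of Linearized Stability. The only difference is that you spell out the eigenvalue-pair bookkeeping behind $\Theta_u$ and $n_{\max}$, which the paper leaves implicit in the cited lemma and the surrounding discussion.
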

\begin{proof}
The results are direct consequences of the Principle of Linearized Stability (Diekmann et al.\cite{Diekmann.1995}, Chapter VII, Theorem 6.8). The linearization of the delayed swing equation (\ref{eq:delayedSwingEq}) is given by (\ref{eq:linerRFDE_delayedDamping}) with $c= \partial / \partial x_1 \sin(x_1+y_e)\vert_{x=x_e}$.
This is $c=\cos(\arcsin(\inh))=\sqrt{1-\inh^2}$ for the delay-free stable equilibria as considered in (i) and $c=-\sqrt{1-\inh^2}$ for the delay-free unstable equilibria in (ii). Stability properties of the linearized systems follow from Proposition \ref{lem:stabSwitch}.
\end{proof}

\begin{figure}
\centering
 \subfloat[parametrization (\ref{eq:param}), i.e. $\tilde a=0.0625$]{
\includegraphics[]{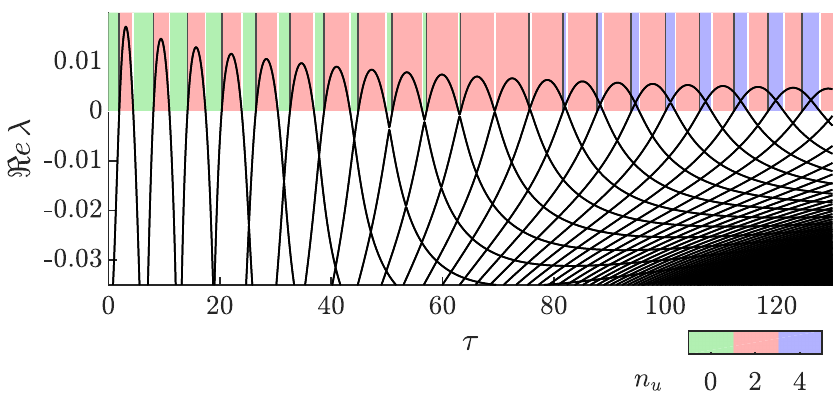}
} 
\\
 \subfloat[parametrization (\ref{eq:param}), but $\tilde a=0.225$]{
\includegraphics[]{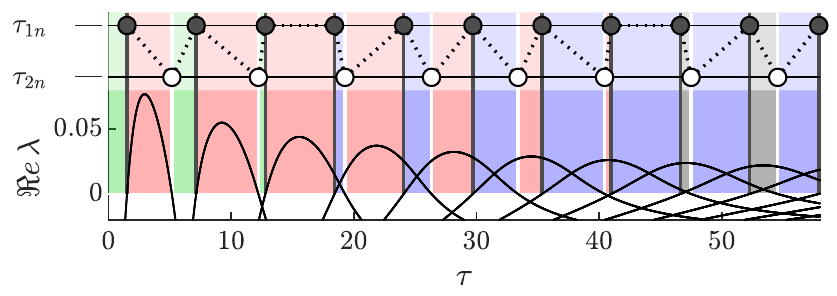}
}
\caption{\label{fig:spectrum} Numerically derived real parts of the most critical eigenvalues for different delay values: their maximum defines the spectral abscissa. Lemma \ref{lem:stabSwitch} gives two sequences of delay values $(\tau_{1n})_{n\in \mathbb N}$ (dark vertical lines) and $(\tau_{2n})_{n\in \mathbb N}$ (white vertical lines) at which the trivial equilibrium of (\ref{eq:delayedSwingEq}) is non-hyperbolic, i.e. $\exists \lambda_i: \Re e \lambda_i=0$. Colors indicate the number of unstable eigenvalues $n_u$, which changes at these points. }
\end{figure}

Figure \ref{fig:spectrum} confirms the above result (i-b) by the numerically determined spectrum for the linearization about the trivial equilibrium.
Delay values $\tau \in \mathbb R_{\geq 0 }\setminus \overline \Theta_u$, with $\Theta_u$ given in (\ref{eq:unstableTau}), correspond to stability of the equilibrium point and are depicted green. 
It should be noted that there is only a finite number of stable intervals, the last one with boundary $\tau_{1(n_{\max}+1)}$. This is due to a different rate of increase of the sequences $(\tau_{1n})_{n\in \mathbb N}$ and $(\tau_{2n})_{n\in \mathbb N}$ in (\ref{eq:tau12n}), such that according to (\ref{eq:n_max}), $\tau_{2n_{\max}}<\tau_{1(n_{\max}+1)}$ but $\tau_{1(n_{\max}+2)} \leq \tau_{2(n_{\max}+1)}$. Albeit at $\tau_{2(n_{\max}+1)}$ an eigenvalue pair indeed moves back to the left half plane, at $\tau_{1(n_{\max}+2)}$ a second eigenvalue pair has already reached the right half plane. Thus, instead of a switching between no and one unstable eigenvalue pair, henceforth a switching between one and two unstable eigenvalue pairs occurs, which in a similar manner will only last for a finite delay range. The effect becomes more obvious with a larger coefficient of delayed damping $\tilde a$: in Figure \ref{fig:spectrum} (b) the second white marker is still left to the third dark marker, i.e. $\tau_{22}<\tau_{13}$,  but $\tau_{14} < \tau_{23}$ and thus $n_{\max}=2$. 

\section{Bifurcation Analysis} \label{sec:BifurcationAnalysis}

According to Proposition \ref{prop:stabOfEquilibria}, delay values  $(\tau_{1n})_{n\in\mathbb N}$, and $(\tau_{2n})_{n\in\mathbb N}$ in (\ref{eq:tau12n})    mark points at which an eigenvalue pair crosses the imaginary axis. 
These delay values are candidates for Hopf bifurcations. 
A Hopf bifurcation goes along with the emergence of a limit cycle, meaning that either for some smaller or for some larger delay values a limit cycle surrounds the equilibrium point. 
\subsection{Analytic Equilibrium Bifurcation Analysis}
There are different indicators in literature which describe the character of a Hopf bifurcation. In the following we use the value of the first Lyapunov coefficient in the bifurcation point $L:=l_1(0)$. For precise Hopf theorems in time delay systems we refer to Diekmann et al.\cite{Diekmann.1995} and Hale and Verduyn Lunel\cite{Hale.1993}.
\begin{remark}[Relation between the First Lyapunov Coefficient and other Indicators] 
For the relation to the coefficient of the resonant cubic term in the Poincaré normal form see Kuznetsov\cite{Kuznetsov.1998}. For the relation to the second order coefficient in a series expansion of the Floquet exponent see Hassard et al.\cite{Hassard.1981} and Kazarinoff et al.\cite{Kazarinoff.1978}. It should be noted that all three mentioned numbers 
obtain the same sign, which is actually the decisive value. 
\end{remark}
Indeed, to mark a Hopf bifurcation is even a generic property of stability switching points with $\omega\neq 0$, since they have already proven to obtain a pair of complex conjugate roots $\lambda=\pm \imag \omega \neq 0$ which crosses the imaginary axis with increasing delay (transversality condition). 
Assume there is no other pair of roots simultaneously on the imaginary axis.
The only circumstance that could hinder these stability switching points from being Hopf points is a vanishing first Lyapunov coefficient $L=0$. 
The latter is referred to as degenerate case\cite{Kuznetsov.1998}. The linearized equation is an example of degeneracy. It is the prototype system of stability switching. However, 
obviously it cannot give rise to a limit cycle (which requires the closed orbit to be isolated).
However, not only whether the Lyapunov coefficient is zero is of interest.
The sign of the Lyapunov coefficient is responsible for the type of the Hopf bifurcation. 
\begin{definition}[Sub- and Supercritical Hopf Bifurcation\cite{Kuznetsov.1998}]
Let $L$ denote the first Lyapunov coefficient in a Hopf bifurcation point. The Hopf bifurcation is called 
{\itshape supercritical}, if $L<0$ and 
{\itshape subcritical}, if $L>0$. 
\end{definition}
The term Hopf bifurcation is sometimes restricted to switches of the equilibrium between being stable and unstable as described above (obviously, in a system with only two eigenvalues, e.g., in the restriction to the two-dimensional center manifold, an eigenvalue pair cannot  cross the imaginary axis without such a stability switch). Then, the sign of the first Lyapunov coefficient has the following consequences:
\begin{lemma}[Sub- and Supercritical Hopf Bifurcation\cite{Kuznetsov.1998}]
\label{lem:SubAndSuperConsequences}
Consider a Hopf bifurcation at a stability switching point. The following implications are valid in a neighborhood of the bifurcation point.\\
{\itshape (i)} $L<0$ (supercritical) $\Leftrightarrow$
a stable limit cycle occurs for bifurcation parameter values at which the equilibrium is unstable.
\\
{\itshape (ii)}  $L>0$ (subcritical) $\Leftrightarrow$ 
an unstable limit cycle occurs for bifurcation parameter values at which the equilibrium is stable.
\end{lemma}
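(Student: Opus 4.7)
The plan is to invoke the Hopf bifurcation theorem for RFDEs together with its constructive proof via center-manifold reduction and Poincaré normal forms (Diekmann et al., Chapter X; Hale and Verduyn Lunel). First I would perform the spectral splitting $X = P \oplus Q$ associated with the infinitesimal generator $\mathscr A$ at the bifurcation value, where $P$ is the two-dimensional realified eigenspace spanned by the critical eigenfunctions for $\lambda = \pm \imag \omega$, and $Q$ collects the remaining, hyperbolic part of the spectrum. By assumption no other critical eigenvalues lie on $\imag \mathbb R$, so a smooth local center manifold $W^c$ tangent to $P$ exists, and the $Q$-direction is exponentially contracting. This reduces the local flow of the nonlinear RFDE to a smooth planar ODE for the $P$-coordinate.

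Expressing the restricted dynamics in the complex coordinate $z$ induced by the adjoint eigenfunction, the standard near-identity transformations remove all non-resonant quadratic and cubic monomials and yield the Poincaré normal form
\begin{equation*}
\dot z = \bigl(\alpha(\mu) + \imag \,\omega(\mu)\bigr)\, z + c_1(\mu)\, z\, |z|^2 + O(|z|^4),
\end{equation*}
with $\alpha(0) = 0$, $\alpha'(0) \neq 0$ from the transversality condition (\ref{eq:dReLambda_dTau}), and $L$ proportional to $\Re e\, c_1(0)$ with a positive factor. Passing to polar coordinates $z = r\,\textrm e^{\imag \varphi}$ decouples the planar system at leading order,
\begin{equation*}
\dot r = \alpha(\mu)\, r + L\, r^3 + O(r^5), \qquad \dot \varphi = \omega(\mu) + O(r^2).
\end{equation*}

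The amplitude equation admits a nontrivial fixed point at $r_*^2 = -\alpha(\mu)/L + O(\alpha^2)$, which is real and positive precisely when $\alpha(\mu)$ and $L$ have opposite signs. This $r_*$ corresponds to a small-amplitude periodic orbit on the center manifold, hence, by exponential attractivity of $W^c$, to a genuine limit cycle of the full RFDE surrounding the equilibrium. Its stability within $W^c$ is governed by $\partial \dot r/\partial r = \alpha(\mu) + 3 L r^2$ evaluated at $r_*$, which equals $-2\alpha(\mu)$; the cycle therefore attracts inside $W^c$ when $\alpha(\mu) > 0$ and repels when $\alpha(\mu) < 0$. Combining the two signs: if $L<0$ the cycle exists exactly where the equilibrium is unstable ($\alpha > 0$) and is stable; if $L>0$ the cycle exists exactly where the equilibrium is stable ($\alpha < 0$) and is unstable. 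The reverse implications hold because a simple Hopf point emits a unique small-amplitude branch, so the type of cycle that appears uniquely fixes the sign of $L$.

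The main obstacle is not the planar normal-form computation but the transfer of existence and of dynamical stability from the center manifold to the full infinite-dimensional flow. This requires a smooth invariant-manifold theorem for RFDEs, an exponential dichotomy on the complementary invariant subspace $Q$, and algebraic simplicity of the critical eigenvalue pair — technical ingredients established in Diekmann et al., Chapter X, which I would cite rather than redevelop. Once this reduction is in place, the sign analysis of the truncated normal form above is routine and directly yields both implications of the lemma.
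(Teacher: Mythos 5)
The paper does not prove this lemma at all: it is quoted as a textbook result with a citation to Kuznetsov, so there is no in-paper argument to compare against. Your proposal is the standard center-manifold/Poincar\'e-normal-form proof of exactly that cited result, and the computations are correct: the nontrivial zero of the amplitude equation at $r_*^2=-\alpha(\mu)/L$ exists precisely when $\alpha$ and $L$ have opposite signs, and the linearization $\alpha+3Lr_*^2=-2\alpha(\mu)$ gives attraction inside $W^c$ exactly when the equilibrium is unstable, which yields both implications together with uniqueness of the branch at a simple Hopf point. One small imprecision: exponential attractivity of $W^c$, which you invoke to carry stability of the cycle from the planar reduction to the full RFDE, does not follow from the absence of \emph{other} eigenvalues on $\imag\mathbb R$ alone --- it requires that the remainder of the spectrum lie in the open left half-plane. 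Here that is supplied by the hypothesis that the Hopf point is a \emph{stability switching} point (on one side the equilibrium is asymptotically stable, so only the critical pair reaches the imaginary axis); if unstable eigenvalues were already present, as in the extended Hopf points $\tau_{1n}$, $\tau_{2n}$ with $n$ beyond the switching range discussed later in the paper, the cycle would instead inherit unstable Floquet multipliers and the equivalences as stated would fail.
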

Thereby, a stability analysis - be it analytically (Lemma \ref{lem:stabSwitch}) or numerically (Figure \ref{fig:spectrum}) - already reveals whether stability of the equilibrium point is given on the side of smaller delay values or on the side of larger delay values. 
\newpar
 However, unstable eigenvalues can already exist when an eigenvalue pair crosses the imaginary axis. Consequently, there might be, besides of the center and stable manifold, also an unstable manifold in the bifurcation point.
 The equilibrium changes from being of type $k$, i.e.\ having $k$ eigenvalues on the right half-plane, to type $(k+2)$ or vice versa. Then, in analogy to Lemma \ref{lem:SubAndSuperConsequences}, a Hopf bifurcation can lead to the emergence of 
 a limit cycle with $k$ or $(k+1)$ unstable Floquet multipliers. 
While system (\ref{eq:main}) has, dependent on the parametrization, an arbitrary but finite number\cite{Cooke.1982} of stability switching points $\tau_{1n}$, $n\leq n_{\max}+1$, $\tau_{2n}$, $n\leq n_{\max}$, there is actually an infinite number of non-hyperbolic points $\tau_{1n}$, $\tau_{2n}$, $n\in \mathbb N$ at which a pair of complex eigenvalues crosses the imaginary axis. All delay values in both sequences are candidates for Hopf bifurcations in this extended sense.
\newpar
In the following Theorem, we give a result for the sign of the first Lyapunov coefficient which is decisive according to Lemma \ref{lem:SubAndSuperConsequences}. We consider the general  case of second order differential equations with delayed damping and arbitrary, delay-free nonlinearity.
\begin{theorem}[First Lyapunov Coefficient for a General Class of Systems]
\label{th:LyapCoeffRes}
Consider the RFDE with delayed damping and delay- and derivative-free nonlinearity
\begin{align}
\ddot y(t) + \tilde a \dot y(t-\tau) + a \dot y(t) + h(y(t))=0,
\label{eq:DelayedDampingGeneralNonlinearity}
\end{align}
$y(t)\in \mathbb R$, $\tilde a, a, \tau \in \mathbb R_{>0}$, $a<\tilde a$, $h\in C^\omega(\mathbb R, \mathbb R)$. Let $y_e\in \mathbb R$ such that $h(y_e)=0$ and $h_e':=\frac{\partial h}{\partial y} \vert_{y_e}>0$ and
\begin{align}
\tau\in (\{\tau_{1n}\}_{n\in \mathbb N} \cup \{\tau_{2n}\}_{n\in \mathbb N}) \setminus (\{\tau_{1n}\}_{n\in \mathbb N} \cap \{\tau_{2n}\}_{n\in \mathbb N})
\end{align}
 with $\tau_{jn}$ given by (\ref{eq:tau12n}) and $c:=h_e'$. Then, the sign of the first Lyapunov coefficient is determined by
\begin{align}
\textrm{sgn } L = \textrm{sgn} \Bigg[& \Re e \left(\frac{1}{\beta \det \Delta (2\imag\omega)} \right)\nonumber \\
&+ 
\Re e \left(\frac 1 {\beta} \right)
\left( \frac 2 {h_e'} -\frac {h'''_e}{(h''_e)^2}\right)
\Bigg] 
\label{eq:LyapCoeffDelayedDampingGeneralNonlinearity}
\end{align}
with
\begin{align}
 \beta=&-(\omega \tau )(\omega^2-h_e') + \imag ((\omega \tau) \omega a +h_e'+\omega^2) 
\label{eq:beta_in_sgnL}
\end{align}
and
\begin{align}
\det \Delta (2\imag\omega)=& -3\omega^2-\left(1+4\frac a{\tilde a} \right)(\omega^2-h_e') \nonumber\\
& +2\imag \omega \left( a- \tilde a + 2\frac {a^2}{\tilde a}  \right),
\end{align}
where $h_e',h_e'',h_e'''$ denote respective derivatives of $h$ evaluated in the equilibrium point $y_e$.
\end{theorem}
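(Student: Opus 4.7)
The plan is to apply the standard Hopf normal form / center manifold reduction for RFDEs (as developed in Hassard--Kazarinoff--Wan and reformulated in Diekmann et al.\ or Kuznetsov) to equation (\ref{eq:DelayedDampingGeneralNonlinearity}) and then simplify the resulting expressions using the characteristic equation at $\lambda = \imag\omega$ to obtain the compact form (\ref{eq:LyapCoeffDelayedDampingGeneralNonlinearity}). First I would pass to state-space form $x=[y-y_e,\dot y]^\top$ as in (\ref{eq:delayedSwingEq}) and Taylor-expand $h$ about $y_e$:
\begin{equation*}
h(y_e+x_1)=h_e'\,x_1+\tfrac{1}{2}h_e''\,x_1^2+\tfrac{1}{6}h_e'''\,x_1^3+O(x_1^4),
\end{equation*}
so that the nonlinearity enters only the second component of $\dot x(t)$ and only through $x_1(t)$ (no delay, no derivative). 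This structural simplicity will be crucial: every nonlinear term has the form $[0,\,*]^\top$ with $*$ depending only on $x_1(t)$, which will cause large simplifications in the bilinear form and in the $W_{20},W_{11}$ contributions to $g_{21}$.

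Next I would compute the spectral data at the Hopf point. The right eigenvector of $\Delta(\imag\omega)$ from (\ref{eq:Delta}) is $q=[1,\imag\omega]^\top$ (nonzero by simplicity of the root, established in the proof of Lemma \ref{lem:stabSwitch}); the adjoint eigenvector $p$ is obtained from $\Delta(\imag\omega)^\top p=0$, giving $p=[-(a+\tilde a e^{-\imag\omega\tau})-\imag\omega/h_e',\ldots]$ up to normalization (using (\ref{eq:iwaexp}) to eliminate $e^{-\imag\omega\tau}$ in favour of $\omega$, $a$, $h_e'$). The normalization constant required so that $\langle p,q\rangle=1$ with respect to the Diekmann bilinear form
\begin{equation*}
\langle \psi,\phi\rangle=\psi(0)^\top\phi(0)+\int_{-\tau}^{0}\psi(\theta+\tau)^\top A_1\phi(\theta)\,d\theta
\end{equation*}
is exactly the quantity $\beta$ in (\ref{eq:beta_in_sgnL}); here $A_1=\mathrm{diag}(0,-\tilde a)$ and the integral contributes the $\omega\tau$ factors.

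Then I would carry out the center-manifold reduction. Writing the reduced flow as $\dot z=\imag\omega z+\tfrac{1}{2}g_{20}z^2+g_{11}z\bar z+\tfrac{1}{2}g_{02}\bar z^2+\tfrac{1}{2}g_{21}z^2\bar z+\cdots$, the coefficients $g_{20},g_{11},g_{02}$ come directly from projecting the quadratic nonlinearity $-\tfrac{1}{2}h_e''x_1^2$ onto $p$, which explains the prefactor $1/\beta$ and isolates a factor $h_e''$ that eventually squares out. The cubic coefficient $g_{21}$ has two contributions: the direct cubic term $-\tfrac{1}{6}h_e''' x_1^3$, which produces the $h_e'''/(h_e'')^2$ term in (\ref{eq:LyapCoeffDelayedDampingGeneralNonlinearity}) after combining with the overall prefactor $(h_e'')^2/\beta$, and the quadratic-times-quadratic contribution involving $W_{20}(0)$ and $W_{11}(0)$. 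The function $W_{20}$ solves an inhomogeneous linear equation whose evaluation at $\theta=0$ requires inverting $\Delta(2\imag\omega)$, which is where $\det\Delta(2\imag\omega)$ enters the denominator; $W_{11}(0)$ requires inverting $\Delta(0)$, whose determinant equals $h_e'$, producing the $2/h_e'$ term. Plugging all of this into Kuznetsov's formula $l_1(0)=\tfrac{1}{2\omega}\mathrm{Re}[\imag g_{20}g_{11}+\omega g_{21}]$, pulling out the common positive factor $(h_e'')^2/(2\omega)$ (harmless for the sign), and using $\langle p,q\rangle=1$ yields exactly (\ref{eq:LyapCoeffDelayedDampingGeneralNonlinearity}).

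The main obstacle is the algebraic bookkeeping in the last step: verifying that the contribution from $g_{20}g_{11}$ combines cleanly with the $W_{20}$ part of $g_{21}$ so that the final expression contains only $\beta$, $\det\Delta(2\imag\omega)$ and the ratio $h_e'''/(h_e'')^2$, and in particular that the $W_{11}$ contribution collapses to the tidy $2/h_e'$. This requires repeated elimination of $e^{-\imag\omega\tau}$ and $e^{-2\imag\omega\tau}$ through the identity $\tilde a e^{-\imag\omega\tau}=-a+\imag(\omega^2-h_e')/\omega$ from (\ref{eq:iwaexp}), and careful use of $\mathrm{Re}$ to discard purely imaginary terms. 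I would organize the computation by first simplifying $g_{11}$ (real-valued normalisation aside) and then merging all $z^2\bar z$ contributions over the common denominator $\beta\det\Delta(2\imag\omega)$; nondegeneracy of the bifurcation in the excluded set $\{\tau_{1n}\}\cap\{\tau_{2n}\}$ is ensured precisely to avoid double imaginary roots, i.e.\ $\det\Delta(2\imag\omega)\neq 0$.
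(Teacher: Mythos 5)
Your proposal is correct and follows essentially the same route as the paper: the paper's Appendix simply invokes the pre-packaged normal-form formula of Lemma \ref{lem:firstLyapCoeffGeneral} (whose ingredients $h_{11}=\Delta^{-1}(0)\,\mathrm{D}^2f(\phi,\overline\phi)$ and $h_{20}=\mathrm e^{2\imag\omega\theta}\Delta^{-1}(2\imag\omega)\,\mathrm{D}^2f(\phi,\phi)$ are exactly your $W_{11}(0)$ and $W_{20}(0)$, and whose projection onto $p$ plays the role of your $g_{21}$ combination), and then performs the same computations you outline --- eigenvectors $q=[1,\imag\omega]^\top$, $p\propto[\imag c,\omega]^\top$, normalization constant $\beta$, exploitation of the fact that the nonlinearity enters only through $x_1(t)$ in the second component, and elimination of the exponentials via (\ref{eq:iwaexp}). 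The only cosmetic difference is that you propose to rederive the reduction in Hassard--Kazarinoff--Wan form rather than cite it; your closing observation that excluding $\{\tau_{1n}\}\cap\{\tau_{2n}\}$ guarantees $\det\Delta(2\imag\omega)\neq 0$ is a correct point the paper leaves implicit.
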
 
\begin{proof}
The proof can be found in the Appendix.
\end{proof}
It should be noted, that, due to (\ref{eq:beta_in_sgnL}), in the second summand of (\ref{eq:LyapCoeffDelayedDampingGeneralNonlinearity}) the term
\begin{align}
\textrm{sgn} \; \Re e \left(\frac 1 {\beta}\right)=\textrm{sgn}\left(\frac {\Re e (\beta)}{\vert \beta\vert^2}\right)= -\textrm{sgn} (\omega^2-c)
\end{align}
with $c:=h_e'$ is decisive. 
The latter depends on whether the bifurcation point is an element of $(\tau_{1n})_{n\in\mathbb N}$ (hence $\omega=\omega_1$) or collected in $(\tau_{2n})_{n\in\mathbb N}$ (hence $\omega=\omega_2$), since 
 according to (\ref{eq:compRe})
$
\omega_1^2-c >0$ and $\omega_2^2-c <0$.
 Consequently, 
\begin{align}
\textrm{sgn} \; \Re e \left(\frac 1 {\beta}\right) =\left\{\begin{array}{ll} -1, & \tau \in (\tau_{1n})_{n\in \mathbb N} \\
+1 , &\tau \in (\tau_{2n})_{n\in \mathbb N}.  \end{array}\right.
\label{eq:sgn_Re_beta}
\end{align}
\begin{proposition}
\label{prop:typeHopf}
Consider the delayed swing equation (\ref{eq:main}), (\ref{eq:param}). 
Let $a<\tilde a$ and $\tau\in (\{\tau_{1n}\}_{n\in \mathbb N} \cup \{\tau_{2n}\}_{n\in \mathbb N}) \setminus (\{\tau_{1n}\}_{n\in \mathbb N} \cap \{\tau_{2n}\}_{n\in \mathbb N})$  with $\tau_{jn}$ given by (\ref{eq:tau12n}) and $c:=\sqrt{1-\inh^2}$. There occur
\begin{itemize}
\item {\itshape supercritical Hopf} bifurcations ($L<0$) at $\tau\in (\tau_{1n})_{n\in \mathbb N}$,
\item {\itshape subcritical Hopf} bifurcations ($L>0$) at  $\tau \in (\tau_{2n})_{n\in \mathbb N}$. 
\end{itemize}
\end{proposition}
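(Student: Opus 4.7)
The plan is to apply Theorem \ref{th:LyapCoeffRes} directly to the swing equation. For the stable equilibrium $y_e = \arcsin(\inh)$ the nonlinearity $h(y) = \sin(y) - \inh$ gives
\begin{align*}
h_e' = \cos(y_e) = \sqrt{1-\inh^2} = c, \quad h_e'' = -\inh, \quad h_e''' = -c.
\end{align*}
Substituting these into the bracketed factor of the second summand in \eqref{eq:LyapCoeffDelayedDampingGeneralNonlinearity} yields
\begin{align*}
\frac{2}{h_e'} - \frac{h_e'''}{(h_e'')^2} = \frac{2}{c} + \frac{c}{\inh^2} = \frac{2\inh^2 + c^2}{c\,\inh^2} = \frac{1+\inh^2}{c\,\inh^2} > 0,
\end{align*}
since $\inh\in(0,1)$ forces $c>0$. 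The excerpt already identifies $\textrm{sgn}\,\Re e(1/\beta) = -\textrm{sgn}(\omega^2-c)$, which evaluates to $-1$ at $\tau\in(\tau_{1n})_{n\in\mathbb N}$ and to $+1$ at $\tau\in(\tau_{2n})_{n\in\mathbb N}$. Hence the second summand in \eqref{eq:LyapCoeffDelayedDampingGeneralNonlinearity} has exactly the sign claimed for $L$ at both families of candidate delays.

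It remains to show that the first summand $\Re e\bigl(1/(\beta\det\Delta(2\imag\omega))\bigr)$ does not overturn this sign. Here I would substitute the explicit formulae \eqref{eq:beta_in_sgnL} for $\beta$, the given expression for $\det\Delta(2\imag\omega)$, and \eqref{eq:omega12} with $c=\sqrt{1-\inh^2}$ for $\omega_{1,2}$, and combine fractions so that the sign reduces to that of $\Re e\bigl(\overline{\beta}\,\overline{\det\Delta(2\imag\omega)}\bigr)$ times a manifestly positive factor. Using the real/imaginary decompositions one obtains a polynomial in $\omega$, $\tau$, $a$, $\tilde a$, $c$ that can be compared with the second summand.

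The main obstacle is the algebraic control of that first summand, because $\tau = \tau_{jn}$ grows linearly in $n$ while $\omega_{1,2}$ stays bounded, so $\beta$ grows only through the $\omega\tau$-entries. I would exploit this to show that in the large-$n$ limit the second summand dominates, and for the remaining finitely many small-$n$ points (with the fixed parameter set \eqref{eq:param}) verify the sign by direct substitution of the numerical values of $\omega_{1,2}$, $\tau_{jn}$, $a$, $\tilde a$, $c$. Once both summands are shown to have the same sign at each $\tau_{1n}$ (negative) and at each $\tau_{2n}$ (positive), the conclusion follows from the definition of sub- and supercriticality and Lemma \ref{lem:SubAndSuperConsequences}, noting that the required transversality and simplicity of the purely imaginary eigenvalue pair were already established in Lemma \ref{lem:stabSwitch} via \eqref{eq:dReLambda_dTau}, so the stability-switching points with $\omega_{1,2}\neq 0$ are indeed non-degenerate Hopf points in the sense of the Hopf theorem for RFDEs.
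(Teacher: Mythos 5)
Your overall route is the paper's: apply Theorem \ref{th:LyapCoeffRes} with $h(y)=\sin(y)-\inh$, compute $h_e'=c$, $h_e''=-\inh$, $h_e'''=-c$, observe that the bracket $\tfrac{2}{h_e'}-\tfrac{h_e'''}{(h_e'')^2}=\tfrac{1+\inh^2}{c\,\inh^2}>0$, and let (\ref{eq:sgn_Re_beta}) fix the sign of the second summand; the paper then argues that, for the parametrization (\ref{eq:param}), this summand dominates the first. The one step of yours that would fail is the asymptotic control of the first summand. You argue that because $\tau_{jn}$ grows linearly in $n$ while $\omega_j$ stays fixed, $\beta$ grows and hence the first summand becomes negligible for large $n$. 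But both summands contain exactly one factor of $1/\beta$, and $\det\Delta(2\imag\omega_j)$ is a \emph{constant} along each sequence, so both summands decay at the same rate $\Theta(1/n)$: indeed $\Re e(1/\beta)=\Re e(\beta)/|\beta|^2$ with $\Re e(\beta)=-(\omega\tau)(\omega^2-c)=\Theta(n)$ and $|\beta|^2=\Theta(n^2)$, while $\Re e\bigl(1/(\beta\det\Delta(2\imag\omega))\bigr)=\Re e\bigl(\beta\det\Delta(2\imag\omega)\bigr)/\bigl(|\beta|^2\,|\det\Delta(2\imag\omega)|^2\bigr)$ has numerator again $\Theta(n)$ since $\Re e(\beta)$ and $\Im m(\beta)$ are both affine in $n$. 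The large-$n$ limit therefore does not hand you domination, and your ``remaining finitely many small-$n$ points'' would then not cover all $n$.

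The domination is a numerical fact about (\ref{eq:param}), not an asymptotic one: with $\inh=0.125$ the multiplier $(1+\inh^2)/(c\,\inh^2)\approx 65.5$ on the second summand is large, and that is what makes it win at \emph{every} $n$. The clean repair — and what the paper's proof effectively does — is to clear the common positive denominator $|\beta|^2$, after which each summand's contribution to $\textrm{sgn}\,L$ is an affine function of $n$; one then only has to compare leading coefficients and one value. The paper's displayed evaluation, $\textrm{sgn}\,L=\textrm{sgn}\bigl[(0.692n+0.260)+(-149.155n-47.057)\bigr]$ (up to a positive quadratic denominator) on $(\tau_{1n})$ and the analogous expression on $(\tau_{2n})$, shows the first summand genuinely has the opposite sign but its coefficients are two orders of magnitude smaller, so the sum has a definite sign for all $n\in\mathbb N$. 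With that repair your argument coincides with the paper's; your closing remarks on transversality and simplicity of $\pm\imag\omega$ are correct and are indeed supplied by Lemma \ref{lem:stabSwitch} via (\ref{eq:dReLambda_dTau}).
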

\begin{proof}
The delayed swing equation (\ref{eq:main}) is equivalent to system (\ref{eq:DelayedDampingGeneralNonlinearity}) with $h(y)=\sin(y)-\inh$. The 
equilibrium of interest is $y_{e}=\arcsin(\inh)$ and derivatives in this point become
$c=h'_e=\cos y_{e}=\sqrt{1-\inh^2}$,
$h''_e=-\sin y_{e}=-\inh$, 
$h'''_e=-\cos y_{e}=-\sqrt{1-\inh^2}$. 
For the chosen parameters (\ref{eq:param}), the second summand in (\ref{eq:LyapCoeffDelayedDampingGeneralNonlinearity})  is dominant and thus (\ref{eq:sgn_Re_beta}) is decisive. To be more precise, a numerical evaluation of (\ref{eq:LyapCoeffDelayedDampingGeneralNonlinearity}) gives
\begin{align}
\textrm{sgn}\;L= \left\{
\begin{array}{ll}
\textrm{sgn} \tfrac{(0.692n+0.260)+(-149.155n-47.057)}{n^2+4.691n+27.137},& \tau\in (\tau_{1n})_{n\in\mathbb N} 
\\
\textrm{sgn} \tfrac{(-0.899n-0.552)+(157.982n+108.140)}{n^2+5.429n+29.004},\!\!& \tau\in (\tau_{2n})_{n\in\mathbb N} .\end{array}
 \right.
\nonumber
\end{align}
\end{proof}

Hence, where elements in  $(\tau_{1n})_{n\in\mathbb N}$ alternate with elements of  $(\tau_{2n})_{n\in\mathbb N}$, sub- and supercritical Hopf bifurcations alternate as well. Consequently, all bifurcations are equally orientated in the following sense.
\begin{corollary}[Direction of Limit Cycle Continuation]
\label{cor:DirLimitCycles}
For each Hopf bifurcation point $\tau_{jn}$ in  (\ref{eq:main}) with (\ref{eq:param}),  existence and increase of the emerging limit cycle is locally ensured on the side of larger delay values $\tau>\tau_{jn}$.
\end{corollary}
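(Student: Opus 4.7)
The plan is to assemble the corollary directly from Proposition \ref{prop:typeHopf}, Lemma \ref{lem:SubAndSuperConsequences} (in its extended reading for non-stability-switching Hopf points), and the transversality information in (\ref{eq:dReLambda_dTau}). The only thing to verify is that, in every one of the four combinations (sub/super) $\times$ (sign of $\mathrm d\Re e\,\lambda/\mathrm d\tau$) that actually occurs along the two sequences, the side on which the limit cycle branches off coincides with $\tau>\tau_{jn}$.

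First I would classify the two cases. At $\tau_{1n}$, Proposition \ref{prop:typeHopf} gives $L<0$, so by Lemma \ref{lem:SubAndSuperConsequences}(i) the emerging limit cycle is locally present on that side of $\tau_{1n}$ on which the bifurcating pair $\pm\imag\omega_1$ has crossed into the open right half-plane. By the transversality (\ref{eq:dReLambda_dTau}), $\frac{\mathrm d}{\mathrm d\tau}\Re e\,\lambda>0$ at $\tau_{1n}$, hence the pair sits in $\mathbb C_+$ for $\tau>\tau_{1n}$; the limit cycle therefore exists locally for $\tau>\tau_{1n}$. Symmetrically, at $\tau_{2n}$ we have $L>0$, so Lemma \ref{lem:SubAndSuperConsequences}(ii) places the emerging (unstable) limit cycle on the side where the bifurcating pair is in $\mathbb C_-$; since $\frac{\mathrm d}{\mathrm d\tau}\Re e\,\lambda<0$ at $\tau_{2n}$, that side is again $\tau>\tau_{2n}$.

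Next I would note that Lemma \ref{lem:SubAndSuperConsequences} as stated concerns true stability switches, whereas for $n>n_{\max}$ the equilibrium is already unstable on both sides of $\tau_{jn}$ and the bifurcation only changes the type from $k$ to $k\pm 2$. Here I would invoke the general Hopf theorem for RFDEs (Diekmann et al., Ch.~X, or Hale--Verduyn Lunel) on the two-dimensional local center manifold, on which exactly the sub/supercritical dichotomy of Lemma \ref{lem:SubAndSuperConsequences} holds, with ``stable/unstable equilibrium'' replaced by the sign of $\Re e\,\lambda$ of the crossing pair alone. The additional unstable directions merely contribute extra unstable Floquet multipliers to the emerging cycle, but do not alter the side of its emergence; that side is still governed by $\mathrm{sgn}\,L$ and $\mathrm{sgn}\,\frac{\mathrm d \Re e\,\lambda}{\mathrm d\tau}$ exactly as in the first paragraph.

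The argument has no real obstacle: once Proposition \ref{prop:typeHopf} and (\ref{eq:dReLambda_dTau}) are in hand, the corollary is a one-line sign bookkeeping. The only point requiring a remark is the extension to non-switching Hopf points $\tau_{jn}$ with $n>n_{\max}$, which I would address by citing the center-manifold version of the Hopf theorem for RFDEs so that the computations of $L$ in Theorem \ref{th:LyapCoeffRes} continue to determine the branching direction.
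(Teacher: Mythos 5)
Your argument is correct and follows essentially the same route as the paper's own proof: combine the crossing direction from (\ref{eq:dReLambda_dTau}) with the alternating sign of $L$ from Proposition \ref{prop:typeHopf} and read off the branching side via Lemma \ref{lem:SubAndSuperConsequences}. Your second paragraph on the non-stability-switching points $n>n_{\max}$ is a welcome extra precaution that the paper handles only in the discussion preceding Theorem \ref{th:LyapCoeffRes} (type $k$ to $k+2$ transitions and the center-manifold reading of the Hopf theorem) rather than inside the proof of the corollary itself.
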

\begin{proof}
Due to (\ref{eq:dReLambda_dTau}), the eigenvalue pair crosses the imaginary axis from left to right if $\tau\in (\tau_{1n})_{n\in \mathbb N}$. Hence, a larger delay value goes along with two more unstable eigenvalues. A Hopf bifurcation with negative first Lyapunov coefficient $L<0$ results in a limit cycle on this side (Lemma \ref{lem:SubAndSuperConsequences} (i)). In contrast, if $\tau\in (\tau_{2n})_{n\in \mathbb N}$, eigenvalues cross from right to left. Thus, the side of larger delays goes along with two more stable eigenvalues. However, since the sign of the first Lyapunov coefficient is also reversed, i.e.\ $L>0$ at $\tau\in (\tau_{2n})_{n\in \mathbb N}$, the emerging limit cycle occurs again for larger delay values (Lemma \ref{lem:SubAndSuperConsequences} (ii)). 
\end{proof}

\begin{figure}
\centering
 \subfloat[]{
\includegraphics[]{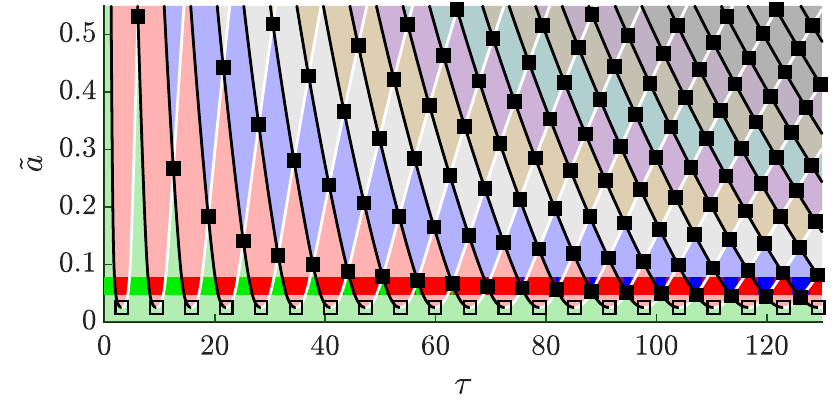}
} 
\\
 \subfloat[detail]{
\includegraphics[]{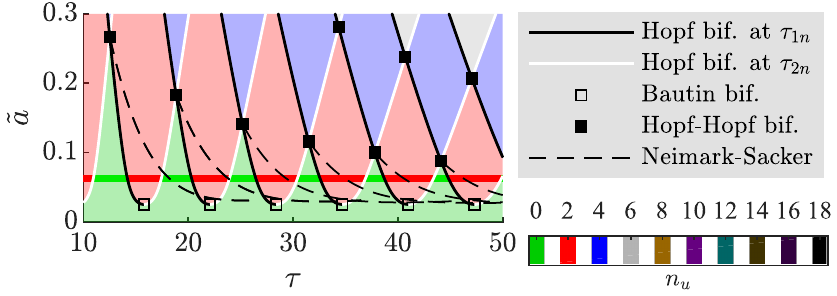}
}
\caption{\label{fig:codim2}Colors indicate the analytically derived number $n_u$ of unstable equilibrium eigenvalues in the $(\tau,\tilde a)$ parameter plane (other parameters according to (\ref{eq:param})). The separating black and white lines mark Hopf points in $(\tau_{1n})_{n\in \mathbb N}$ and $(\tau_{2n})_{n\in \mathbb N}$, respectively (cmp. vertical lines in Figure \ref{fig:spectrum}). At their intersection points, Hopf-Hopf bifurcations occur. These Hopf-Hopf equilibrium bifurcation points form starting points of Neimark-Sacker bifurcation curves of limit cycles, at which invariant tori emerge. The numerically derived dashed lines in (b) refer to those Neimark-Sacker bifurcations which have been detected in Figure \ref{fig:overview} with $\tilde a=0.0625$ (non-transparent line in (a) and (b)).}
\end{figure}

Figure \ref{fig:codim2} is based on the above analytic results for (\ref{eq:main}), (\ref{eq:param}). It considers the number of unstable eigenvalues not only dependent on $\tau$ but also on a variation of $\tilde a$. Positions of bifurcation delay values (\ref{eq:tau12n}) change if another parameter like $\tilde a$ is varied. Thereby, possible points of codimension two bifurcations become visible. However, there are some further non-degeneracy conditions for these types of bifurcations. According to the numerical bifurcation tool \texttt{DDE-BIFTOOL}\cite{Engelborghs.2002} these conditions indeed hold in the points we have examined.
\begin{itemize}
 \item {\itshape Hopf-Hopf} bifurcations (also called two-pair bifuraction, double-Hopf bifurcation) in the $(\tau,\tilde a)$ plane or in the  $(\tau,\inh)$ plane become possible if $a$, $\tilde a$ and $\inh$ are chosen such that $a<\tilde a$ and $\tau \in  (\{\tau_{1n}\}_{n\in \mathbb N} \cap \{\tau_{2n}\}_{n\in \mathbb N})$. Such intersection points of $\tau_{1n}$ and $\tau_{2n}$ curves in the $(\tau,\tilde a)$ plane are marked in Figure \ref{fig:codim2}. At these parameter combinations, two pairs of purely imaginary eigenvalues $\pm \imag \omega_1$, $\pm \imag \omega_2$ cross simultaneously the imaginary axis in opposite directions. 
\item {\itshape Bautin} bifurcations (also called degenerate Hopf bifurcation, generalized Hopf bifurcation) in the $(\tau,\tilde a)$ plane are possible if $a=\tilde a$.  Then, at $\tau_{n}=(1-w^2)^{-1/2} \pi (2n+1)$, $n\in \mathbb N$ an eigenvalue pair lies on the imaginary axis, but the Lyapunov coefficient vanishes (Figure \ref{fig:codim2}). 
\item {\itshape Fold-Hopf} bifurcations (also called zero-Hopf, saddle-node Hopf, Gavrilov-Guckenheimer bifurcation) in the $(\tau,\inh)$ plane  become possible if $\inh=1$ (cmp. top view of Figure \ref{fig:overview}) and $\tilde a>a$. Due to (\ref{eq:c_inh_relation}) $c=0$. Hence, $\lambda=0$ becomes an eigenvalue (cmp. (\ref{eq:lambda_neq_0})), while at $\{\tau_{1n}\}_{n\in \mathbb N}$  with $\omega_{1}=\sqrt{\tilde a ^2-a^2}$ an eigenvalue pair is simultaneously on the imaginary axis. 
\end{itemize}
Furthermore, in Figure \ref{fig:codim2} Hopf bifurcation curves bound the green marked area of stability. Such Christmas tree like stability charts manifest the possibility of stability switching and are well known in time delay systems\cite{Boese.1989, Abdallah.1993}. 

\subsection{Numeric Approximations of Emerging Limit Cycles}
\begin{figure*}
\includegraphics[]{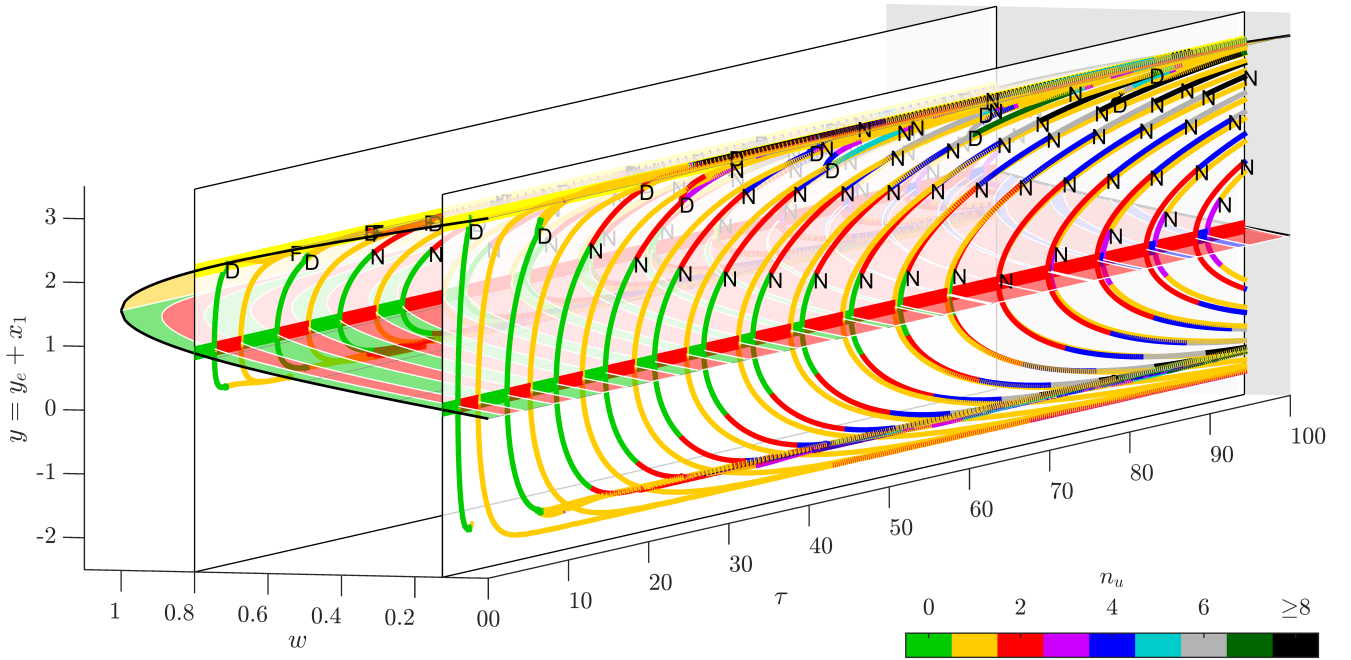} 
\caption{\label{fig:overview}The bended surface corresponds to $y=y_e+x_1$ coordinates of equilibria over the $(\tau,\inh)$ parameter plane. Thereby, colors indicate the number of unstable eigenvalues. Additionally, for fixed parameter values $\inh\in\{0.125,0.8\}$, bold, striped horizontal lines are drawn along the lower equilibrium surface (the first one corresponds to Figure \ref{fig:spectrum} (a) and can also be found in Figure \ref{fig:codim2}). They emphasize that the number of unstable eigenvalues changes with increasing delay. Where Hopf bifurcations occur along such lines, limit cycles emerge. On the vertical, white $(\tau,y)$ planes, maximal and minimal $x_1$ coordinates of these limit cycle solutions are given. Line colors are chosen according to the number of unstable Floquet-Multipliers. This limit cycle representation equals the outline of a top view in the representation of Figure \ref{fig:hom} with different $\tau$-scaling. Letters mark detected limit cycle bifurcations like Neimark-Sacker (N), period doubling (D) or limit cycle fold (F).}
\end{figure*}
A numerical analysis does not only determine normal form coefficients and thus the type of bifurcation. It also allows to track limit cycles which emerge from Hopf bifurcations and to approximate the corresponding periodic solution. The MATLAB Toolbox \texttt{DDE-BIFTOOL}\cite{Engelborghs.2002} is a valuable tool for this purpose. Figure \ref{fig:overview} visualizes our results. 
\newpar
The colored surface in Figure \ref{fig:overview} describes the equilibrium manifold in the $(\tau,\inh,x_1)$ space. Assume parameter values $\tau$ and $\inh$ are given, then the $x_1$ coordinates of equilibria can be read off. Stability ranges, which are indicated by color, are due to analytic results (Proposition \ref{prop:stabOfEquilibria}). The upper equilibrium branch is delay independently unstable (one unstable eigenvalue), while the lower branch switches its stability property with increasing delay. For $\inh>1$ no equilibrium exists. Not only in the delay-free system, but also for $\tau>0$, the upper and lower equilibrium branch collide in a fold bifurcation at $\inh=1$. We consider an equilibrium branch along $[\tau,\inh,x_1]^\top=[\tau,0.125,0]^\top$ with increasing $\tau$-values. At the Hopf bifurcation points $\tau\in(\tau_{in})_{n\in\mathbb N}$, $i\in\{1,2\}$ a limit cycle branches off. Maximum and minimum $x_1$ values of the corresponding periodic solutions are plotted. 
\newpar
Thereby, the meaning of alternating super- and subcritical Hopf bifurcations (Proposition \ref{prop:typeHopf}) becomes clear. For $\tau<\tau_{1,1}$ the equilibrium in $[\tau,\inh,x_1]^\top=[\tau,0.125,0]^\top$ is stable. For $\tau_{1,1}\in (\tau_{1n})_{n\in\mathbb N}$, $n\leq n_{\max}+1$ stability is lost and a supercritical Hopf bifurcation is responsible for the occurence of a stable limit cycle at larger delay values. The limit cycle surrounds the henceforth unstable equilibrium. Of course, only locally, i.e.\ for delay values nearby the bifurcation point, stability and even existence of the limit cycle are granted. At parameter values $\tau_{2,1}\in (\tau_{2n})_{n\in\mathbb N}$, $n\leq n_{\max}$ the equilibrium regains stability. Again, a Hopf bifurcation causes the branch off of a limit cycle. However, this time, it does not surround the unstable equilibrium which would mean the limit cycle exists for delay values smaller than $\tau_{2,1}$. Instead, the limit cycle occurs again for larger delay values (Corollary \ref{cor:DirLimitCycles}) - now surrounding the stable equilibrium as result of a subcritical Hopf bifurcation. This pattern will repeat for larger delay values. The number of detected coexisting limit cycles rises with increasing delay.
\begin{remark}[Consequences for the Domain of Attraction]
Domains of attraction are essential in power system stability analysis.
Results of Schaefer et al.\cite{Schafer.2015} suggest that for certain delay ranges an increasing time delay is able to enlarge (in the sense of a generalized basin stability\cite{Menck.2013,Leng.2016}) the domain of attraction of the delay-free stable equilibrium point. This seems to be in accordance with the above detected growing limit cycles. Indeed, in second order ODEs a limit cycle which stems from a Hopf bifurcation would clearly mark the boundary of the domain of attraction for parameter values nearby. Hence, the detected growing limit cycles would go along with a growing domain of attraction as the bifurcation parameter increases.
However, infinite dimensionality of time delay systems makes meaningful conclusions of this manner hardly possible. There is no reason why solutions which are closer to the attractor should not diverge and cross the limit cycle in the $(x_1,x_2)$ plane, i.e.\ in the $(x_{1t}(0),x_{2t}(0))$ projection of the infinite dimensional state space.  Nevertheless, the results enable us to gain upper bounds on the radius of attraction. Such upper bounds are needed in order to prove that a minimum norm requirement on the domain of attraction is not fulfilled. We refer to Scholl et al.\cite{Scholl.2019}, Example VI.1.
\end{remark}
\subsection{Numeric Results on Bifurcations of Limit Cycles}
	\begin{figure}
\includegraphics[]{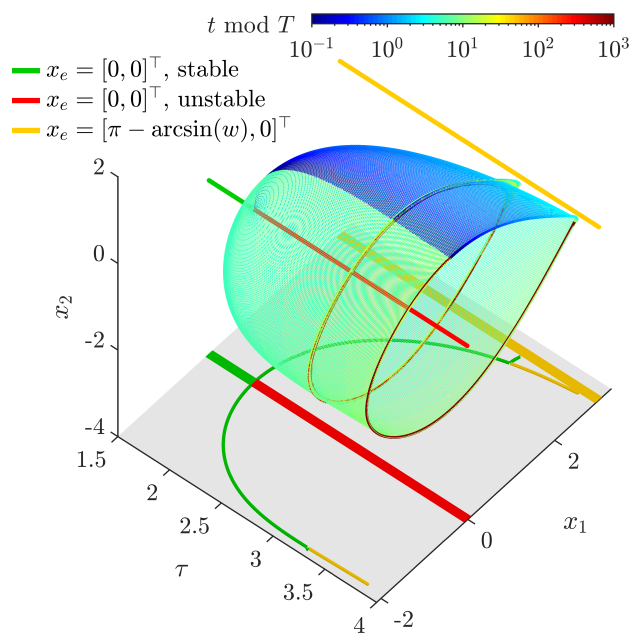}
\caption{\label{fig:hom} For a given delay value $\tau$, a vertical slice shows the appearance of the limit cycle in the $(x_1,x_2)$ plane. Color indicates the elapsed time, if the limit cycle solution segment with $\phi_1(0)=0$, $\phi_2(0)\geq 0$ is chosen as initial function $\phi$. At the bottom, the extreme $x_1$ values are sketched. These correspond to the first arc at $\inh=0.125$ in Figure \ref{fig:overview}. The limit cycle emerges at $\tau= 1.93$ due to a supercritical Hopf bifurcation. At $\tau=3.26$ it looses stability in a period doubling bifurcation. A stable limit cycle branches off. The original limit cycle as well as its period doubling branch off grow and evolve to homoclinic orbits of the type-one saddle equilibrium at $\tau=3.29$ and $\tau=3.83$, respectively. }
\end{figure}
The Hopf theorem does only guarantee stability properties and existence of a limit cycle locally in a small one-sided neighborhood of the bifurcation point. However, in limit cycle bifurcations, stability and even existence of limit cycles can get lost for larger parameter variations.
\newpar 
Based on a two-point boundary value problem, with boundary condition $x_0=x_T$ formulating periodicity, a numeric approximation of the $T$-periodic solution as well as the unknown period $T$ can be derived\cite{Roose.2007}. Figure \ref{fig:hom} visualizes how the limit cycle, which emerges in the first Hopf bifurcation of (\ref{eq:delayedSwingEq}),(\ref{eq:param}) at $\tau=\tau_{11}$, develops with increasing delay $\tau$: for each $\tau$ value, the corresponding vertical slice shows the limit cycle in the $(x_1,x_2)$ plane. 
\begin{itemize}
	\item A {\itshape homoclinic bifurcation} becomes visible as the growing limit cycle evolves to a homoclinic orbit of the adjacent equilibrium, i.e.\ $T\to \infty$ (Figure \ref{fig:hom}).

	\end{itemize}
Besides the location, numeric approximations of the Floquet multipliers
allow to conclude stability of limit cycles.
As in ODEs, there is always the trivial Floquet multiplier at $\mu=1+0\imag$, which does not affect stability of the limit cycle\cite{Breda.2015}. 
However, in the course of a bifurcation parameter variation, a further Floquet multiplier might come to be at the the stability boundary, the unit cycle in the $\mathbb C$ plane. 
Again this cannot happen without a major qualitative change in the system behavior, i.e.\ a bifurcation. The following well known types of limit cycle bifurcations have been detected in the delayed swing equation (Figure \ref{fig:overview}):   
\begin{itemize}
	\item In a {\itshape limit cycle fold bifurcation (tangent bifurcation, saddle-node bifurcation, limit point)} a stable and an unstable limit cycle collide, i.e.\ at the bifurcation point, there remains only one limit cycle. Thereby, a (further, besides the trivial one) Floquet multiplier is on the unit cycle at $\mu_{c}=1+0\imag$. By larger parameter variations the limit cycle disappears at all. On the back plane with $\inh=0.8$ in Figure \ref{fig:overview}, such a fold bifurcation is visible at the point where the first two limit cycles  collide. 
	\item {\itshape Period doubling bifurcations (flip bifurcations)} go along with a crossing of the unit cycle at the opposite side: $\mu_{c}=-1+0\imag$. A new limit cycle with double period branches off. Figure \ref{fig:PD} shows a sequence of period doubling bifurcations. \rev{Delay values and limit cycle periods of consecutive period doubling bifurcations are marked by black squares in Figure \ref{fig:PD_tau_T}.} The linear increase \rev{of these points} in the double logarithmic diagram does not only show the doublings of limit cycle periods (i.e.\ the solution over two periods does no longer consist of two equal solution segments - thus twice as many encirclings of the equilibrium are needed to describe the limit cycle).  It also reveals the geometric progression of bifurcation points: \rev{an accelerating succession of bifurcations can be observed as a certain delay value $\tau_\infty$ is approached. The horizontal axis of Figure \ref{fig:PD_tau_T} describes time delay $\tau$ in terms of the difference to this reference value  $\tau_\infty$. Linearity in the logarithmic representation uncovers the exponential behavior.}  It is well known, that period doubling cascades form a possible route to chaos. Whether the cascade has indeed infinitely many elements and hence leads to a non-periodic ($T=\infty$) limit set \rev{at $\tau=\tau_\infty$}, cannot be inferred from our results.
	
	\item {\itshape A Neimark-Sacker bifurcation (torus bifurcation)} is the counterpart to the Hopf bifurcation of an equilibrium. It occurs if a complex conjugate Floquet multiplier pair crosses the unit cycle at $\textrm{e}^{\pm i\varphi}$ with  $\textrm{e}^{ik\varphi}\neq 1+0\imag$ for $k\in\{1,2,3,4\}$. While in a Hopf bifurcation of an equilibrium a limit cycle evolves which encircles the equilibrium, by a Neimark-Sacker bifurcation an invariant torus around the limit cycle might result. How the position of Neimark-Sacker bifurcation points in Figure \ref{fig:overview} changes if $\tilde a$ varies, is given in Figure \ref{fig:codim2} (b).
\end{itemize}

\section{Conclusions}

When time delay is increased, linear second order systems with additional delayed damping or restoring force are able to show again and again characteristic roots crossing the imaginary axis. In nonlinearily perturbed systems each crossing is a candidate for a Hopf bifurcation point, which is further characterized by the sign of the first Lyapunov coefficient. 
The present paper gives a formula of the latter for general second order differential equations with additional delayed damping and arbitrary, delay-free nonlinearity. The result is applied to the delayed swing equation. It allows to prove, that sub- and supercritical Hopf bifurcations are combined in such a way that all emerging limit cycle branches head towards the direction of larger delays. As a consequence, more and more limit cycles are likely to coexist with increasing delay. These limit cycles are tracked by numeric means and respective graphical representations are given.
The latter show how due to further limit cycle bifurcations a surprisingly rich variety of complicated dynamics results - including invariant tori and period doubling cascades.  
Summarizing, bifurcations of the following types are realized: Hopf, Bautin, Hopf-Hopf, fold, limit cycle fold, homoclinic, period doubling and Neimark-Sacker. 
Hence, time delay already generates in the simple swing equation phenomena which are only known from higher order power system models.
\begin{figure}
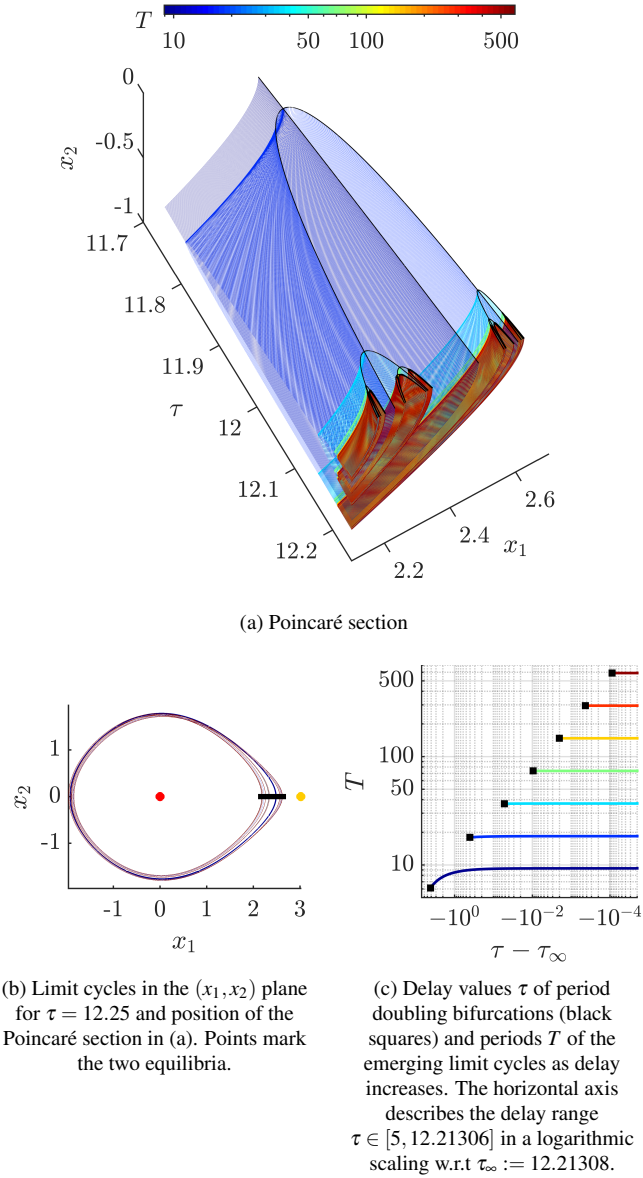

\centering
 \subfloat[Poincaré section]{
\includegraphics[]{fig_PD_px.pdf}
} 
\\
 \subfloat[Limit cycles in the $(x_1,x_2)$ plane for $\tau=12.25$ and position of the Poincaré section in (a). Points mark the two equilibria.]{
\includegraphics[]{fig_PD_2d_px.pdf}
}
\hfill
 \subfloat[\label{fig:PD_tau_T}\rev{Delay values $\tau$ of period doubling bifurcations (black squares) and periods $T$ of the emerging limit cycles as delay increases. 
The horizontal axis describes the delay range $\tau\in [5,12.21306]$ in a logarithmic scaling w.r.t\ $\tau_\infty:=12.21308$.}]{
\includegraphics[]{fig_PD_period_px_new.pdf}
}
\caption{\label{fig:PD} \revv{A finite dimensional Poincaré section} (as black marked cutting edges) of a period doubling cascade is shown in (a). Depicted is a horizontal cut through limit cycles from 6 subsequent period doubling bifurcations. In contrast to simulatively derived Poincaré \revv{sections}, the unstable limit cycles remain visible. The perspective in (a) is similar to Figure \ref{fig:hom} albeit $\tau\in[11.7,12.25]$, $x_2<0$, $x_1>2.1$. This time, extreme $x_1$ values of the full limit cycles refer to the second green arc at $\inh=0.125$ in Figure \ref{fig:overview}.   }
\end{figure}


\begin{acknowledgments}
The authors acknowledge support by the Helmholtz Association under the Joint Initiative "Energy Systems Integration" (ZT-0002).
\end{acknowledgments}


\section{Appendix}
In order to prove Theorem \ref{th:LyapCoeffRes}, first the following Lemma is recalled in the general case.
\begin{lemma}[First Lyapunov Coefficient\cite{Bosschaert.2015,Wage.2014}] \label{lem:firstLyapCoeffGeneral}
Consider a nonlinear RFDE with $m$ discrete delays 
\begin{align}
\dot x(t)=& f(X), \label{eq:FirstLyap_system}\\
& X=[x(t),x(t-\tau_1),\ldots,x(t-\tau_m)]\in \revv{\mathbb R^{n\times (m+1)} ,} \nonumber
\end{align}
$x(t)\in \mathbb R^n$ 
, $f \in C^\omega(\revv{\mathbb R^{n\times (m+1)}}, \mathbb R^n)$ , 
$f(0_{n\times (m+1)})=\revv{0_n}$ 
in a Hopf bifurcation point with Taylor expansion about $X_e=0_{n\times (m+1)}$ 
\begin{align}
\dot x(t)=& \textrm D f(0_{n\times (m+1)})(X) + \frac 1 2 \textrm D^2 f(0_{n\times (m+1)})(X,X) \nonumber \\
& 
+ \frac 1 {3!} \textrm D^3 f(0_{n\times (m+1)})(X,X,X) +\mathcal O(X^4).
\label{eq:FirstLyap_systemTaylor}
\end{align} 
Assume the simple pair of purely imaginary eigenvalues is $\lambda_{1,2}=\pm \imag \omega$,  given as roots of the characteristic equation $\det \Delta(\lambda)=0$. Let $p,q\in \mathbb R^n$ fulfill
\begin{align}
p^\top\Delta(\imag \omega)=0, \quad \Delta(\imag \omega)q=0 \quad \textrm{ with } p^\top \textrm D \Delta (\imag \omega) q =1.
\label{eq:pq_normalization}
\end{align}
Assign
\begin{align}
M\colon C([-\tau,0],\mathbb R^n) &\to \revv{\mathbb R^{n\times (m+1)}} , \nonumber \\
\phi &\mapsto M(\phi):=[\phi(0),\phi(-\tau_1),\ldots,\phi(-\tau_m)], \nonumber
\end{align}
i.e.\ $X=M(x_t)$.
Then, the first Lyapunov coefficient is given by
\begin{align}
L=\frac{1}{\omega}\Re e \Big(p^\top\Big[ 
& \textrm D^2 f(0_{n\times (m+1)})( {M(\phi)}, M(h_{11}))
\nonumber\\
+& \frac 1 2 \textrm D^2 f(0_{n\times (m+1)})( {M(\overline \phi)}, M(h_{20})) 
\nonumber\\
+& \frac 1 2 \textrm D^3 f(0_{n\times (m+1)})(  {M(\phi)}, {M(\phi)},{M(\overline \phi)})
\Big]\Big),
\label{eq:LyapCoeffGeneral}
\end{align}
where
\begin{align}
\phi(\theta) &=\textrm{e}^{\imag\omega\theta}q  \nonumber\\
 h_{11}(\theta) &= \Delta^{-1}\!(0) \;\textrm D^2 f(0_{n\times (m+1)})({M(\phi)},{M(\overline \phi)}) \nonumber \\
 h_{20}(\theta)&= \textrm{e}^{2\imag \omega\theta} \Delta^{-1}\!(2\imag \omega) \;\textrm D^2 f(0_{n\times (m+1)})({M(\phi)},{M(\phi)}). 
\label{eq:phi_h11_h20}
\end{align}
\end{lemma}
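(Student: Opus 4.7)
The plan is to specialize Lemma \ref{lem:firstLyapCoeffGeneral} to the two-dimensional first-order reduction $\dot x(t)=f(X)$ of (\ref{eq:DelayedDampingGeneralNonlinearity}), with $x=[y-y_e,\dot y]^\top$ and $X=[x(t),x(t-\tau)]$. Expanding $h$ in Taylor series about $y_e$ makes the nonlinearity extremely sparse: only the second component of $f$ is nonlinear, and that dependence is on the first coordinate of $x(t)$ alone. Consequently $\textrm{D}^2 f(0)$ and $\textrm{D}^3 f(0)$ produce vectors of the form $[0,\ast]^\top$ whose scalar entry is a real multiple of $h_e''$ or $h_e'''$. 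This sparsity collapses every multilinear evaluation in (\ref{eq:LyapCoeffGeneral}) to a single scalar, and only the second component of the normalized left null vector will ultimately matter.

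Next I would determine the eigenvectors from $\Delta(\lambda)$ in (\ref{eq:Delta}) with $c=h_e'$. A direct calculation gives $q=[1,\imag\omega]^\top$ and $p=[h_e',-\imag\omega]^\top$ as right and left null vectors, the second nullspace equation being precisely the characteristic identity $\det\Delta(\imag\omega)=0$. I would rearrange that identity as $\tilde a\,\textrm e^{-\imag\omega\tau}=-a+\imag(\omega^2-h_e')/\omega$ and carry it throughout. From $\tfrac{\textrm d}{\textrm d\lambda}\Delta(\imag\omega)=\mathrm{diag}\bigl(1,\,1-\tilde a\tau\,\textrm e^{-\imag\omega\tau}\bigr)$ together with this identity, one computes $p^\top\tfrac{\textrm d}{\textrm d\lambda}\Delta(\imag\omega)\,q=\imag\overline{\beta}$, with $\beta$ as in (\ref{eq:beta_in_sgnL}). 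Rescaling by this factor yields the normalized left null vector whose second component is $\hat p_2=-\omega/\overline\beta$, and this is the complex prefactor that will control the final Real-part extraction.

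I would then build $h_{11}$ and $h_{20}$ via (\ref{eq:phi_h11_h20}). Because $\textrm{D}^2 f(M(\phi),M(\overline\phi))$ and $\textrm{D}^2 f(M(\phi),M(\phi))$ both collapse to the constant vector $[0,-h_e'']^\top$, the function $h_{11}$ reduces to the real constant $\Delta^{-1}(0)[0,-h_e'']^\top=[-h_e''/h_e',0]^\top$ (using $\det\Delta(0)=h_e'$), while only the first component of $h_{20}(0)=\Delta^{-1}(2\imag\omega)[0,-h_e'']^\top$, namely $-h_e''/\det\Delta(2\imag\omega)$, enters the subsequent bilinear evaluation. The explicit expression for $\det\Delta(2\imag\omega)$ stated in the theorem follows by computing $\tilde a\,\textrm e^{-2\imag\omega\tau}$ through the double-angle formulas from $\cos\omega\tau=-a/\tilde a$ and $\sin\omega\tau=(\omega^2-h_e')/(\omega\tilde a)$, which are the real- and imaginary-part relations (\ref{eq:compImag})--(\ref{eq:sin2}) already established in the proof of Lemma \ref{lem:stabSwitch}.

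Finally I would assemble (\ref{eq:LyapCoeffGeneral}). The inner bracket reduces to $\hat p_2 A$ with $A=(h_e'')^2/h_e'+\tfrac12(h_e'')^2/\det\Delta(2\imag\omega)-\tfrac12 h_e'''$, so the normalization cancels the outer $1/\omega$ and delivers $L=-\Re e(A/\overline\beta)$. Splitting $A$ into its purely real part $(h_e'')^2/h_e'-h_e'''/2$ and the complex part involving $\det\Delta(2\imag\omega)$, factoring $(h_e'')^2/2$, and using $\Re e(1/\overline\beta)=\Re e(1/\beta)$ delivers exactly the right-hand side of (\ref{eq:LyapCoeffDelayedDampingGeneralNonlinearity}) up to a positive multiplicative constant. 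I expect the main obstacle to be precisely this bookkeeping of conjugates and signs across the normalization step: a stray factor of $\imag$ or complex conjugate in the identity $p^\top\tfrac{\textrm d}{\textrm d\lambda}\Delta(\imag\omega)\,q=\imag\overline\beta$ would propagate directly to a sign error in $L$, so I would verify that identification carefully by direct substitution of the characteristic equation. Once the normalization factor is secured, the remaining manipulations are routine algebra.
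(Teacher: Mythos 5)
You have proved the wrong statement. What was to be established is Lemma \ref{lem:firstLyapCoeffGeneral} itself, i.e.\ the general normal-form formula (\ref{eq:LyapCoeffGeneral})--(\ref{eq:phi_h11_h20}) for the first Lyapunov coefficient of an RFDE with several discrete delays. Your proposal opens by ``specializing Lemma \ref{lem:firstLyapCoeffGeneral}'' and then uses it as a black box to derive Theorem \ref{th:LyapCoeffRes}; as an argument for the lemma this is circular, and as a matter of content it addresses a different, downstream result. Note that the paper does not prove this lemma either --- it is recalled from the cited references --- and a genuine proof requires the Hopf normal-form machinery for delay equations: the spectral projection onto the critical eigenspace via the sun-star dual semigroup or the formal adjoint bilinear form (which is where the normalization $p^\top \textrm D\Delta(\imag\omega)q=1$ originates), the solution of the homological equations on the center manifold (which is what produces $h_{11}=\Delta^{-1}(0)(\cdots)$ and $h_{20}(\theta)=\textrm e^{2\imag\omega\theta}\Delta^{-1}(2\imag\omega)(\cdots)$ and why $\Delta$ must be invertible at $0$ and $2\imag\omega$, i.e.\ the non-resonance conditions), and finally the extraction of the resonant cubic coefficient. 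None of this appears in your write-up, so the lemma remains unproved.

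Read instead as a proof of Theorem \ref{th:LyapCoeffRes}, your outline tracks the paper's Appendix almost step for step: the same right and left null vectors up to a scalar, the same use of the characteristic identity to eliminate $\tilde a\,\textrm e^{-\imag\omega\tau}$, the same collapse of the multilinear forms due to the sparsity of the nonlinearity, and the same evaluation of $\det\Delta(2\imag\omega)$ via $\cos\omega\tau=-a/\tilde a$. One concrete slip in that reading: with $p=[h_e',-\imag\omega]^\top$ and $q=[1,\imag\omega]^\top$ one finds $p^\top \textrm D\Delta(\imag\omega)q=(h_e'+\omega^2+a\tau\omega^2)+\imag\,\omega\tau(\omega^2-h_e')=-\imag\beta$, not $\imag\overline\beta$; the two differ by complex conjugation, and carrying $\overline\beta$ through yields $L=-\Re e(\overline A/\beta)$ instead of the correct $\Re e(A/\beta)$ --- precisely the sign error you flagged as the danger point. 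The paper avoids this by leaving $p$ and $q$ with free factors $\alpha_p,\alpha_q$ and imposing $\alpha_p\alpha_q=\beta^{-1}$ only at the end.
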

Based on Lemma \ref{lem:firstLyapCoeffGeneral}, we are able to state the proof of Theorem \ref{th:LyapCoeffRes}. 
\begin{proof} [Proof of Theorem \ref{th:LyapCoeffRes}]
Consider the state space representation of (\ref{eq:DelayedDampingGeneralNonlinearity}) in 
$x(t)=[\xI (t),\xII (t)]^\top$
\begin{subequations}
\begin{align}
\dot \xI (t)&=\xII (t)\\
\dot \xII (t) &= -a\xII (t)-\tilde a \xII (t-\tau) -h(\xI (t))
\end{align}
\label{eq:stateSpaceGeneral}
\end{subequations}
with Taylor expansion about $x_e=[0,0]^\top$
\begin{align}
\dot x(t)=&
\begin{bmatrix} 0 & 1  \\ -h_e' & -a\end{bmatrix}
x(t) 
+
\begin{bmatrix} 0 & 0  \\  0& -\tilde a \end{bmatrix} 
x(t-\tau)
\nonumber \\
&+\begin{bmatrix}
0 \\
-\frac 1 2 h_e''\xI^2 (t)-\frac 1 {3!} h_e'''\xI^3 (t) + \mathcal O(\xI^4)
\end{bmatrix}.
\label{eq:TaylorStateSpaceGeneral}
\end{align}
where $h_e^{(k)}$ denotes the $k$-th derivative of $h(\xI)$ in $\xI =0$. In order to evaluate (\ref{eq:pq_normalization}), the characteristic matrix given by (\ref{eq:Delta}) with $c:=h_e'$ is considered. Due to  (\ref{eq:iwaexp}), an evaluation at $\lambda=\imag \omega$ can be simplified by 
$
a+\tilde a \textrm e^{-\imag \omega\tau} 
=-\imag \omega +\imag \frac c \omega.
$
It follows
\begin{align}
\Delta(\imag \omega)=\begin{bmatrix}\imag \omega & -1 \\ c & \imag \frac c \omega \end{bmatrix}
\end{align}
with 
$
 \ker \Delta^\top(\imag \omega)=\begin{bmatrix} \imag c & \omega\end{bmatrix} ^\top \mathbb C
$
and
$
\ker \Delta(\imag \omega)=\begin{bmatrix} 1 & \imag \omega \end{bmatrix}^\top \mathbb C 
$.
According to (\ref{eq:pq_normalization}) normalization factors  $\alpha_p,\alpha_q\in \mathbb C$, have to be chosen, such that 
\begin{align}
p^\top:=\alpha_p  \begin{bmatrix} \imag c & \omega\end{bmatrix} ^\top ,\quad 
q:=\alpha_q  \begin{bmatrix} 1 \\ \imag \omega\end{bmatrix}  
\label{eq:pq}
\end{align}
fulfill a relative normalization $p^\top \textrm D \Delta(i\omega) q\stackrel ! =1$, where the derivative of the characteristic matrix is given by 
\begin{align}
\textrm D \Delta(\lambda)=\begin{bmatrix} 1 & 0 \\0 & 1-\tau\tilde a \textrm e^{-\lambda\tau}\end{bmatrix}.
\end{align}
Hence, the normalization condition can be written with (\ref{eq:pq}) as
\begin{align}
\frac 1 {\alpha_q\alpha_p}\stackrel !=&\begin{bmatrix} \imag c & \omega\end{bmatrix} 
\begin{bmatrix}
1 & 0 \\
0 & 1-\tau \tilde a \textrm e^{-\imag \omega \tau} 
\end{bmatrix}
 \begin{bmatrix} 1 \\ \imag \omega \end{bmatrix} 
.
\end{align}
Again, relation (\ref{eq:iwaexp}) yields a simplification $\imag \omega (\tilde a \textrm e^{-\imag \omega \tau}) =\omega^2-c-\imag \omega a$, such that
\begin{align}
\frac 1 {\alpha_q\alpha_p}\stackrel !=\beta :=&
 \imag c+\imag \omega^2-\omega\tau (\omega^2-c-\imag \omega a ) 
\nonumber
\\
=& -(\omega \tau )(\omega^2-c) + \imag ((\omega \tau) \omega a +c+\omega^2) .
\label{eq:alpha_p_alpha_q_beta}
\end{align}
Since the nonlinearity $h$ in (\ref{eq:stateSpaceGeneral}) and thus also the higher order terms in (\ref{eq:TaylorStateSpaceGeneral}) depend only on $\xI (t)$, i.e. on $X_{11}$ in (\ref{eq:FirstLyap_system}) and (\ref{eq:FirstLyap_systemTaylor}), equation (\ref{eq:LyapCoeffGeneral}) results in
\begin{align}
L=\frac{1}{\omega}\Re e \Bigg(p^\top\!\Bigg[ \!
&
\begin{bmatrix}
0 \\
-h_e'' \phi_1(0) h_{11,1}(0)
\end{bmatrix} 
+ \frac 1 2 
\begin{bmatrix}
0 \\
-h_e'' \overline \phi_1(0) h_{20,1}(0)
\end{bmatrix} 
\nonumber\\
+& \frac 1 2 
\begin{bmatrix}
0 \\
-h_e''' \phi_1(0)^2 \overline \phi_1(0)
\end{bmatrix} 
\!\Bigg]\Bigg).
\label{eq:L_inter}
\end{align}
Let $e_1=[1,0]^\top$, $e_2=[0,1]^\top$, then according to (\ref{eq:phi_h11_h20})
\begin{align}
\phi_1(0) &=q_1 , \quad \quad
 h_{11,1}(0) = e_1^\top \Delta^{-1}\!(0) 
\begin{bmatrix}
0 \\
-h_e'' q_1 \overline q_1
\end{bmatrix} 
,\nonumber\\
 h_{20,1}(0)&=  e_1^\top \Delta^{-1}\!(2\imag \omega) \begin{bmatrix}
0 \\
-h_e'' q_1 q_1
\end{bmatrix} ,
\end{align}
where $q_1 q_1=q_1 \overline q_1=\alpha_q^2$ and
\begin{alignat}{5}
&\Delta(0)=  \begin{bmatrix}0 & -1  \\ c   & a+\tilde a   \end{bmatrix} ,
\quad&&
\Delta(2\imag \omega)=  \begin{bmatrix} 2\imag \omega & -1  \\ c & 2\imag \omega+a+\tilde a (\textrm{e}^{-\imag \omega\tau} )^2 \end{bmatrix}
\nonumber
\\
&e_1^\top\Delta^{-1}\!(0)e_2=  \frac 1 c ,
&&
e_1^\top\Delta^{-1}\!(2\imag \omega)e_2= \frac 1 {\det \Delta(2\imag \omega)}
.&
\end{alignat}
Solving (\ref{eq:iwaexp}) for $\imag \omega \tilde a (\textrm{e}^{-\imag \omega\tau} ) =\omega^2-c-\imag \omega a$ and  (\ref{eq:compRe}) for $(\omega^2-c)^2=\tilde a (1-(a/\tilde a)^2)$, the needed determinant is given by
\begin{align}
\det \Delta(2\imag \omega)=&-4\omega^2+2\imag \omega a +2\imag \omega \tilde a (\textrm{e}^{-\imag \omega\tau} )^2 +c 
\\
=&-4\omega^2+2\imag \omega a - \frac {2 \imag }{ \omega \tilde a}(\omega^2-c-\imag \omega a)^2 +c 
 \\
=&-3\omega^2-(\omega^2-c)-4\frac a{\tilde a} (\omega^2-c)
\nonumber \\
& +2\imag \left(\omega a  \left(1+ \frac a {\tilde a} \right) -\frac{(\omega^2-c)^2}{\omega \tilde a}\right) 
 \\
=&-3\omega^2-\left(1+4\frac a{\tilde a} \right)(\omega^2-c)
\nonumber \\
& +2 \imag \left( \omega\left(a+ \frac {a^2}{\tilde a}\right) - \omega \tilde a \left(1 - \frac{a^2}{\tilde a^2}\right)  \right) 
 \\
=&-3\omega^2-\left(1+4\frac a{\tilde a} \right)(\omega^2-c)
\nonumber \\
& 
+2 \imag \omega \left( a- \tilde a + 2\frac {a^2}{\tilde a}  \right).
\label{eq:detDelta2iw}
\end{align}
With $p_2=\alpha_p\omega$, $\omega \in \mathbb R$ and $q_1=\alpha_q$, (\ref{eq:L_inter}) results in
\begin{alignat}{5}
L=&\Re e \Big(\alpha_p\Big[
&& 
-h_e'' \phi_1(0) h_{11,1}(0)
- \frac 1 2 
h_e'' \overline \phi_1(0) h_{20,1}(0)
\nonumber\\
&&&- \frac 1 2 
h_e''' \phi_1(0)^2 \overline \phi_1(0)
\Big]\Big) 
\nonumber\\
=&\Re e \Bigg(\alpha_p\Bigg[ 
&&h_e'' \alpha_q \frac{1}{c} h_e''\alpha_q \overline \alpha_q
+ \frac 1 2 
h_e'' \overline \alpha_q \frac{1}{\det \Delta(2\imag \omega)}  h_e'' \alpha_q^2
\nonumber\\
&&&- \frac 1 2 
h_e''' \alpha_q ^2 \overline \alpha_q 
\Bigg]\Bigg)  .
\end{alignat}
According to (\ref{eq:alpha_p_alpha_q_beta}), $\alpha_p\alpha_q=\beta^{-1}$, and with 
 $c=h_e'$ 
the first Lyapunov coefficient can be evaluated as
\begin{alignat}{4}
L=&\Re e \Bigg( 
\frac 1 2 \beta^{-1}|\alpha_q|^2  (h''_e)^2
\Bigg[ \frac 2 {h_e'} + \frac 1 {\det \Delta(2\imag \omega)}- \frac{h_e'''}{(h_e'')^2} \Bigg] \Bigg).
\label{eq:resL_general}
\end{alignat}
The decisive terms for the sign of $L$ are stated in (\ref{eq:LyapCoeffDelayedDampingGeneralNonlinearity}).
\end{proof}
\begin{remark}[Noninvariance of $L$]
The value of the first Lyapunov coefficient $L$ in (\ref{eq:resL_general}) depends on $\vert \alpha_q\vert$, i.e.\ on the arbitrary normalization of $q$. However, this ambiguity does not affect the sign of $L$, which alone is decisive in bifurcation analysis (cmp. Kuznetsov\cite{Kuznetsov.1998}, page 99).
\end{remark}
\begin{remark}[Transformed Equation]
There are some mathematical issues discussed in Hale and Verduyn Lunel\cite{Hale.1993} if the delay value is considered as bifurcation parameter. However, by a time scale transformation $t=\hat t \tau$ in combination with $y(\hat t \tau)=\hat y(\hat t)$, it can be achieved that the bifurcation parameter $\tau$ contributes only to coefficients, while the delay value is fixed at one in
\begin{align}
\hat y ''(\hat t)+a  \tau  \hat y'(\hat t)+\tilde a \tau \hat y'( \hat t-1) + \tau^2 \sin(\hat y(\hat t))) &= \tau^2\inh.
\end{align}
Analogously to (\ref{eq:iwaexp}), the characteristic equation at purely imaginary roots $\hat\lambda=i\hat\omega$ is 
\begin{align}
i\hat \omega\tau(a+\tilde a \textrm e ^{-i\omega}) =\omega^2- \tau^2c.
\end{align}
Comparing imaginary parts results in $a+\tilde a \cos(\hat \omega)=0$. Hence, there are two sequences $(\hat\omega_{1n})_{n\in\mathbb N}$, $(\hat\omega_{2n})_{n\in\mathbb N}$
\begin{align}
\hat\omega_{1n}=&\arccos \left(-\frac a {\tilde a}\right)+2\pi n
\\
\hat\omega_{2n}=&-\arccos \left(-\frac a {\tilde a}\right)+2\pi (n+1)
\end{align}
which are related by $\hat\omega_{in}=\omega_i \tau_{in}$, $i\in\{1,2\}$ to the untransformed result (\ref{eq:omega_tau}).
Using $\sin(\omega_{1n,2n})= \pm \sqrt{1-
{a^2}/ {\tilde a^2}}$ in the real parts yields
\begin{align}
\left(\frac {\hat \omega} \tau \right)^2 \mp  \tilde a \sqrt{1-\frac {a^2} {\tilde a^2}}  \left(\frac {\hat \omega} \tau \right) -c&=0  
\end{align}
and indeed leads analogously to (\ref{eq:compRe}) to ${\hat \omega_{in}}/{ \tau_{in}}=\omega_i$ in (\ref{eq:omega12}). Thus, of course, the delay values given in (\ref{eq:tau12n}) are confirmed. 
Furthermore, the inverse normalization factor $\beta$ in (\ref{eq:alpha_p_alpha_q_beta}), derivatives of the nonlinearity $h$, as well as $\det \Delta (2\imag \omega)$ in (\ref{eq:detDelta2iw}) are scaled by $\tau^2$. As a consequence, if formula (\ref{eq:resL_general}) is applied to the transformed equation, all scalings cancel out. Hence, the coefficients of the untransformed equation can directly be used in (\ref{eq:resL_general}) .
\end{remark}


\nocite{*}
\bibliography{Literatur}

\end{document}